\numberwithin{equation}{section}
\def\cS{{\mathcal S}}
\def\RR{{\mathbb R}}
\def\CC{{\mathbb C}}
\newtheorem{prop}{Proposition}[section]
\newtheorem{theo}[prop]{Theorem}
\newtheorem{lemm}[prop]{Lemma}
\newtheorem{coro}[prop]{Corollary}
\newtheorem{conj}[prop]{Conjecture}
\def\begeq{\begin{equation}}
\def\endeq{\end{equation}}
\def\lab{\ }
\def\lab{\label}
\title{A continuity method to construct canonical metrics}
\author{Gabriele La Nave\\University of Illinois\\
\\Gang Tian\thanks{Supported partially by
grants from NSF and NSFC}
\\Beijing University and Princeton University}
\date{}
\begin{document}
\maketitle
\tableofcontents

\section{Introduction}

The K\"ahler-Ricci flow has played a fundamental role in the Analytic Minimal Model Program.
There has been quite a bit of progresses and many very important results have been proven (e.g., \cite{tianzhang06}, \cite{songtian07}, \cite{songtian09} et al). In this short paper,
we introduce a new continuity method
which provides an alternative way of carrying out the Analytic Minimal Model Program. This method may not be as natural as the K\"ahler-Ricci flow, but it
has the advantage of having Ricci curvature bounded from below along the deformation, so many existing analytic tools, such as the compactness theory of Cheeger-Colding-Tian for K\"ahler manifolds and the partial $C^0$-estimate, can be applied.

Assume that $M$ is a compact K\"ahler manifold with a K\"ahler metric $\omega_0$. We consider the ~1-parameter family of equations:
\begin{equation}\label{eq:main-1}
\omega = \omega _0 - t\,{\rm Ric }(\omega).
\end{equation}
Clearly, the K\"ahler classes vary according to the linear relation:
$[\omega]\,=\,[\omega_0] - t\,c_1(M)$, where $[\omega] \in H^2(M,\RR) \cap H^{1,1}(M)$
denotes the K\"ahler class of $\omega$.

Our first theorem is:
\begin{theo}\label{th:main-1}
For any initial K\"ahler metric $\omega_0$, there is a smooth family of solutions $\omega_t$ for \eqref{eq:main-1} on $M\times [0,T)$, where
\begin{equation}\label{eq:T}
T\,=\,\sup \{\,t\,|\, [\omega_0]\,-\,t\,c_1(M)\,>\,0\,\}.
\end{equation}
\end{theo}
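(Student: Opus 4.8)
The plan is to convert the geometric PDE into a scalar Monge–Ampère equation and run an openness/closedness argument on the interval $[0,T)$. Write $\omega_t = \omega_0 - t\,\rho_0 + \frac{\sqrt{-1}}{2\pi}\partial\bar\partial u_t$, where $\rho_0$ is a fixed form representing $c_1(M)$ (say $\rho_0 = \mathrm{Ric}(\omega_0)$, or better a fixed reference Ricci form). For $t \in [0,T)$ the class $[\omega_0] - t c_1(M)$ is Kähler, so I fix once and for all a smooth family of reference Kähler forms $\chi_t$ in these classes; then solving \eqref{eq:main-1} is equivalent to finding $u_t$ with $\chi_t + \frac{\sqrt{-1}}{2\pi}\partial\bar\partial u_t > 0$ and, after taking $\mathrm{Ric}$ of both sides and using $\mathrm{Ric}(\omega) = -\frac{\sqrt{-1}}{2\pi}\partial\bar\partial\log\omega^n$ together with $\omega = \omega_0 - t\,\mathrm{Ric}(\omega)$, a complex Monge–Ampère equation of the form

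\[
(\chi_t + \tfrac{\sqrt{-1}}{2\pi}\partial\bar\partial u_t)^n \;=\; e^{\frac{u_t - \dot{}}{t} + F_t}\,\omega_0^n ,
\]

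i.e. $\log$ of the determinant is controlled by $u_t$ itself with a favorable sign. Here the key structural point — and the reason this continuity method behaves well — is that the "potential" appears with the right sign so that the linearized operator is $\Delta - \frac{1}{t}(\,\cdot\,)$, which is invertible for $t>0$; this is what will give openness without any restriction, and it is also the source of the lower Ricci bound advertised in the introduction.

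**The continuity/openness step.**

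Let $S \subset [0,T)$ be the set of $t$ for which a smooth solution $\omega_t$ exists. It is nonempty: at $t=0$ the equation is simply $\omega = \omega_0$. For openness at $t_0 \in S$, I linearize the Monge–Ampère equation above at the solution $u_{t_0}$; the linearization is $v \mapsto \Delta_{\omega_{t_0}} v - \frac{1}{t_0} v$ (for $t_0 > 0$), which is an isomorphism on the appropriate Hölder spaces since its kernel is trivial by the maximum principle — this is exactly where having $t>0$ is used, and it is cleaner than the Kähler–Ricci flow where one must quotient by constants. At $t_0 = 0$ one argues directly since the equation degenerates to an identity perturbed in $t$; a short separate argument (or reparametrizing near $0$) handles the start. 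So $S$ is open in $[0,T)$.

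**The closedness step — the main obstacle.**

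Closedness is the heart of the matter: I must show that if $[0,t_0) \subset S$ with $t_0 < T$ then $t_0 \in S$, which amounts to uniform a priori estimates for $u_t$ as $t \uparrow t_0$. The order is the standard Yau chain: first a $C^0$ estimate (from the maximum principle applied to the Monge–Ampère equation, using that $\chi_t$ stays in a fixed compact family of Kähler classes on $[0,t_0]$ since $t_0 < T$, and exploiting the good sign of the zeroth-order term to avoid any Moser iteration); then a second-order estimate bounding $\Delta_{\omega_0} u_t$ via a Yau/Aubin computation — here one needs a lower bound on the bisectional curvature of the reference metrics and must absorb terms coming from $\frac1t$, so this is the technically delicate point, and I expect \emph{this} to be the main obstacle, since the $\frac1t$ factor is benign as $t \to t_0 > 0$ but the curvature bookkeeping must be done carefully; then the Calabi $C^3$ / Evans–Krylov estimate upgrades to $C^{2,\alpha}$, and bootstrapping via Schauder gives uniform $C^\infty$ bounds on compact subsets of $(0,t_0]$. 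Passing to the limit yields a smooth solution at $t_0$. Combining openness, closedness, and nonemptiness, $S = [0,T)$, which is the theorem. I would remark that, unlike the Kähler–Ricci flow, no parabolic smoothing is available, so every estimate must be elliptic and all constants tracked uniformly in $t$ on $[0,t_0]$; the payoff, to be used later in the paper, is that these same estimates will deliver the uniform lower Ricci bound.
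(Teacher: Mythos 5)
Your overall strategy is the same as the paper's: reduce \eqref{eq:main-1} to the scalar Monge--Amp\`ere equation \eqref{eq:p-1} with a zeroth-order term of favorable sign, fix reference K\"ahler forms $\tilde\omega_t=\omega_0-t\psi>0$ on $[0,\bar t]$ for each $\bar t<T$, get openness at $t_0>0$ from invertibility of $\Delta-t^{-1}$, and get closedness from the maximum-principle $C^0$ bound, a second-order estimate, Calabi/Evans--Krylov and Schauder. Your closedness discussion is fine (and since closedness is only ever invoked at $\bar t>0$, the $1/t$ terms are indeed harmless there; the paper uses a Schwarz-lemma type estimate requiring only an upper bound on the bisectional curvature of $\tilde\omega_{t}$, while the Aubin--Yau computation you sketch works just as well), so if anything you overstate the difficulty of that step.

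The genuine gap is openness at $t=0$, which you dismiss in one sentence (``a short separate argument, or reparametrizing near $0$''). In your scheme this step cannot be skipped: without it $S=\{0\}$ is closed but not open in $[0,\bar t]$, and the connectedness argument never starts. Moreover it is not a routine application of the implicit function theorem: at $t=0$ equation \eqref{eq:p-1} is not elliptic in $u$ at all (it degenerates to the identity $u=\log(\omega_0^n/\Omega)$), the Hessian of the unknown enters only with the factor $t$, and no reparametrization removes this singular perturbation. The paper's proof spends most of Lemma \ref{lemm:2-1} exactly here: one writes $u=\log(\omega_0^n/\Omega)+t^{-1}w$, proves the scaled linear estimate that the solution of $\Delta_0 v-t^{-1}v=tf$ satisfies $t^{-1}\|v\|_{C^0}+\|v\|_{C^{2,1/3}}\le C\,t^{2/3}\|f\|_{C^{1/3}}$ with $C$ independent of $t$, and then runs an iteration in which the extra factor $t^{1/3}$ makes the scheme contract for $t$ small. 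You would need to supply this (or an equivalent quantitative argument). Alternatively you could bypass $t=0$ entirely by observing that for each fixed $t\in(0,T)$ the substitution $v=tu$ turns \eqref{eq:p-1} into $(\tilde\omega_t+\sqrt{-1}\,\partial\bar\partial v)^n=e^{v/t}\,\Omega$, an Aubin--Yau type equation with the good sign, solvable outright by the classical theorem; but as written your proposal neither proves nor cites such a statement, so the starting point of your continuity path is unjustified.
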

This is an analogue of the sharp local existence theorem for the K\"ahler-Ricci flow due to Z. Zhang and the second author \cite{tianzhang06} and it is not hard to prove.

If $T < \infty$, we need to examine the limit of $\omega_t$ as $t$ tends to $T$. In general, this is highly non-trivial. However, we can still
prove the following:

\begin{theo}
\label{th:main-2}
Assume that $T< \infty$ and $([\omega_0]-T c_1(M))^n > 0$, where $n=\dim_\CC M$,
then $\omega_t$ converge to a unique weakly K\"ahler metric $\omega_T$ such that $\omega_T$ is smooth on $M\backslash \cS$, where $\cS$ is a subvariety, and satisfies:
\begin{equation}\label{eq:T-2}
\omega_T\,=\,\omega_0\,-\,T\,{\rm Ric}(\omega_T)~~~{\rm on}~~M\backslash \cS.
\end{equation}
Furthermore, $\cS$ is the base locus of $[\omega_0] - T c_1(M)$, i.e., the set of points where $[\omega_0] - T c_1(M)$ fails to be positive.
\end{theo}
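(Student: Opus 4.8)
The plan is to reduce \eqref{eq:main-1} to a scalar complex Monge--Ampère equation and then study the behavior of its solutions as $t \uparrow T$ using a priori estimates away from the base locus $\cS$ of $L_T := [\omega_0] - T c_1(M)$. Write $L_t = [\omega_0] - t c_1(M)$. Since $([\omega_0]-Tc_1(M))^n > 0$ and $L_T$ is nef (being a limit of Kähler classes), $L_T$ is big and nef; fix a smooth closed $(1,1)$-form $\chi_T$ in $L_T$. For $t < T$ write $L_t = L_T + (T-t)c_1(M)$ and choose a reference form $\chi_t = \chi_T + (T-t)\,\mathrm{Ric}(\Omega)$ for a fixed smooth volume form $\Omega$ with $\mathrm{Ric}(\Omega) \in c_1(M)$. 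Setting $\omega_t = \chi_t + \sqrt{-1}\partial\bar\partial \varphi_t$, equation \eqref{eq:main-1} becomes, after taking $\partial\bar\partial\log$ of both sides, the Monge--Ampère equation
\[
(\chi_t + \sqrt{-1}\partial\bar\partial\varphi_t)^n \,=\, e^{\varphi_t/t}\,\Omega,
\]
up to normalization of $\Omega$; this is exactly the equation whose smooth solvability for $t<T$ is Theorem 1.2.

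The next step is to establish $t$-uniform estimates. The $C^0$-estimate: by the Kołodziej-type $L^\infty$ bound for Monge--Ampère equations with big and nef class on the right (the reference class $L_T$ has $L_T^n > 0$, so the relevant pluripotential capacities are controlled uniformly as $t \to T$), one gets $\|\varphi_t\|_{L^\infty} \le C$ independent of $t$. Having the $C^0$-bound, one runs the standard Aubin--Yau second-order estimate, but only \emph{locally} on compact subsets $K \Subset M \setminus \cS$: there $\chi_T$ can be modified by $\sqrt{-1}\partial\bar\partial$ of a local potential to a genuine Kähler form, so $\chi_t$ is uniformly Kähler on a neighborhood of $K$, and the Laplacian estimate together with the complex Evans--Krylov / Schauder bootstrap gives $\|\varphi_t\|_{C^{k}(K)} \le C_{K,k}$ for every $k$. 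Hence along any sequence $t_j \uparrow T$, after passing to a subsequence, $\varphi_{t_j} \to \varphi_T$ in $C^\infty_{\mathrm{loc}}(M\setminus\cS)$ and weakly (in $L^1$, as bounded $\chi_T$-plurisubharmonic functions) on all of $M$; the limit $\omega_T = \chi_T + \sqrt{-1}\partial\bar\partial\varphi_T$ is a closed positive current, smooth on $M\setminus\cS$, and passing to the limit in the PDE gives \eqref{eq:T-2} on $M\setminus\cS$.

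For uniqueness of the limit, observe that any two subsequential limits $\varphi_T, \varphi_T'$ are bounded $\chi_T$-psh solutions of the same Monge--Ampère equation $(\chi_T + \sqrt{-1}\partial\bar\partial\varphi_T)^n = e^{\varphi_T/T}\Omega$ in the pluripotential sense; the comparison principle (strict monotonicity of $u \mapsto e^{u/T}$) forces $\varphi_T = \varphi_T'$, so the full family converges. Finally, one must identify the singular set exactly with the base locus: on $M\setminus\mathrm{Bs}(L_T)$ the class $L_T$ has a local Kähler potential, so the argument above already shows smoothness there, giving $\cS \subseteq \mathrm{Bs}(L_T)$; conversely, on any neighborhood where $\omega_T$ is a smooth Kähler form it represents $L_T$ and exhibits $L_T$ as locally positive, so that neighborhood avoids the base locus, giving the reverse inclusion.

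The main obstacle I expect is the pair of estimates near $\cS$: the $C^0$-estimate must be genuinely \emph{uniform as $t\to T$} even though the limiting class $L_T$ is only big and nef (not Kähler), which requires the Kołodziej/Eğmy--Guedj--Zeriahi machinery rather than the classical Yau estimate; and one must be careful that the local second-order estimate degenerates precisely at $\cS$ and no faster, so that $C^\infty_{\mathrm{loc}}$ convergence on the full complement $M\setminus\cS$ really holds and the limiting current has no extra singularities beyond the base locus.
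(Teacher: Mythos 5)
Your skeleton (reduce to a scalar Monge--Amp\`ere equation, obtain estimates away from the base locus, extract subsequential limits, then prove uniqueness) matches the paper, but the two steps you lean on most are not justified, and they are exactly where the paper's work lies. First, the claimed uniform bound $\|\varphi_t\|_{L^\infty}\le C$: Kolodziej/EGZ-type $L^\infty$ estimates require a semi-positive (e.g.\ semi-ample) reference form with positive total volume; for a class that is merely nef and big, as $[\omega_0]-Tc_1(M)$ is here, no such uniform two-sided bound is available by citation, and the paper never claims one --- it only proves a uniform upper bound on $u$ (Lemma \ref{lemm:3-1}) and a lower bound away from an auxiliary divisor, degenerating like $t^{-1}(\log||\sigma||^2-c)$. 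Second, your ``local Aubin--Yau'' Laplacian estimate on $K\subset M\backslash\cS$ does not follow from the mere existence of a local K\"ahler potential for $\chi_t$ near $K$: interior second-order estimates for complex Monge--Amp\`ere equations are not purely local, because the maximum-principle argument must control where the maximum sits, and a priori it could run off toward $\cS$. The key idea missing from your proposal is the barrier coming from Kodaira's lemma: choose an effective $\RR$-divisor $D=\sum_i a_iD_i$ with $[\omega_0]-Tc_1(M)-[D]$ ample, set $\tilde\omega_{t,D}=\tilde\omega_t+\sqrt{-1}\,\partial\bar\partial\log||\sigma||^2$ and $v=tu-\log||\sigma||^2$, and apply the maximum principle to $w=\log{\rm tr}_{\omega_t}(\tilde\omega_{t,D})-(a+1)v$; since $w\to-\infty$ along $D$, the maximum is attained away from $D$ and one obtains the two-sided trace bound \eqref{eq:lap-est-2} with explicit degeneration in $||\sigma||$, after which Calabi's third-derivative estimate gives uniform local $C^3$ bounds on compact sets off $D$. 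Varying $D$ yields smoothness precisely off the non-ample locus $B_+([\omega_0]-Tc_1(M))$, which is how the paper identifies $\cS$.

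Your uniqueness argument also rests on the unproved $C^0$ bound: to invoke the pluripotential comparison principle you need the subsequential limits to be globally bounded $\chi_T$-psh functions with full Monge--Amp\`ere mass, whereas the estimates actually available only control the limit off $\cS$ (with a possible $-\log||\sigma||^2$-type drop near $\cS$), so the comparison principle cannot be applied as stated. The paper's uniqueness step is different and more elementary: differentiating the scalar equation in $t$ gives $\Delta_t\dot v=-t^{-2}u+t^{-1}\dot v$, and integrating by parts, using only the upper bound on $u$, yields $\int_M|\dot v|^2\,\omega_t^n\le C_T$ (Lemma \ref{lemm:2-3}); on any compact $K\subset M\backslash D$ the uniform equivalence of $\omega_t$ with $\omega_0$ then gives $\int_K|v(\cdot,t)-v(\cdot,t')|^2\,\omega_0^n\le C\,|t-t'|$, so all subsequential limits coincide. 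If you wish to keep the pluripotential route, you would first have to establish the uniform $L^\infty$ estimate (or boundedness and full mass of the limit) for this nef-and-big degeneration, which is a genuine gap in the proposal and not something the cited machinery provides off the shelf.
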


For any $\omega_t$ above ($t<T$), we have
$${\rm Ric}(\omega_t) \,=\, t^{-1}\,(\omega_0\,-\,\omega_t)\,\ge\, t^{-1}\,\omega_t.$$
In particular, the Ricci curvature of $\omega_t$ is bounded from below near $T$. We do expect a uniform bound on the diameter of
$\omega_t$ for any $t\in (0,T)$ even if $T=\infty$. If so, by taking subsequences if necessary, we may assume that
$(M,\omega_t)$ converge to a length space $(M_T, d_T)$ in the Gromov-Hausdorff topology. Then one should be able to further prove that
$(M_T, d_T)$ is the metric completion of $(M\backslash \cS, \omega_T)$.

If $T=\infty$, we expect that $\omega_t$, after appropriate scaling, converge to a weakly K\"ahler-Einstein metric or a generalized K\"ahler-Einstein metric --
introduced by Song and the second author in \cite{songtian09}-- on the canonical model of $M$. In fact, if $(-c_1(M))^n > 0$, we can verify this by using the same arguments
in the proof of Theorem \ref{th:main-2} (see Section 3).

The organization of this paper is as follows: In the next section, we prove Theorem \ref{th:main-1} by using standard arguments for complex Monge-Ampere equations.
In Section 3, we prove Theorem \ref{th:main-2}. In Section 4, we describe the Analytic Minimal Model Program by using this new continuity method. This is parallel to what has been done for the K\"ahler-Ricci flow (see \cite{tian02}, \cite{tian07}, \cite{songtian07} and \cite{songtian09}). We will also propose
a number of problems for carrying out the program.

\section{Maximal solution time}

In this section, we prove Theorem \ref{th:main-1}. First we reduce \eqref{eq:main-1} to a scalar equation. Choose
a real closed $(1,1)$ form
$\psi$ representing $ c_1(X)$ and a smooth volume form $\Omega$ such that ${\rm Ric}({\Omega})=
\psi$. This $\Omega$ is unique up to multiplication by a positive constant.

Set $\tilde \omega_{t}= \omega_{0}-t \psi$ for $t\in [0,T)$. One can easily show that
$\omega =\tilde \omega_t + t \sqrt{-1}\partial \bar{\partial} u$ satisfies \eqref{eq:main-1} if $u$ satisfies
\begin{equation}
\lab{eq:p-1}
(\tilde\omega_t\,+\,t \sqrt{-1}\,\partial\bar\partial \,u)^n\,=\,e^{ u}\, \Omega,
\end{equation}
where $\tilde\omega_t\,+\,t\,\partial\bar\partial \,u\,>\,0$. This equation depends on the choice of $\psi$, but if we choose a different
representative $\hat \psi$ of $c_1(M)$, \eqref{eq:p-1} changes in a simple way: Write $\hat \psi = \psi + \sqrt{-1}\,\partial\bar\partial\,v$
and $\hat\omega_t = \omega_0 - t \hat \psi$, then for any solution $u$ of \eqref{eq:p-1}, $\hat u = u-v$ solves
\begin{equation}
\lab{eq:p-1'}
(\hat \omega_t \,+\,t \sqrt{-1}\,\partial\bar\partial \,\hat u)^n\,=\,e^{ \hat u}\, \hat \Omega,
\end{equation}
where $\hat \Omega$ is a volume form satisfying:
$${\rm Ric}({\hat \Omega})= \hat \psi,~~~\int_M\, e^u\,\Omega \,=\,\int_M\, e^{\hat u}\,\hat \Omega.$$
This shows that the solvability of \eqref{eq:p-1} is independent of the choice of $\psi$.

On the other hand, it follows from the definition of $T$ that for any $\bar t < T$, there is a $\psi$ such that $\omega_0 - \bar t \,\psi > 0$.
Thus, in order to prove Theorem \ref{th:main-1},
we only need to prove that \eqref{eq:p-1} is solvable for such a $\psi$ and any $t\in [0,\bar t]$. Put
$$E\,=\,\{ t\in [0,\bar t]\,|\,\eqref{eq:p-1} ~{\rm has~a~solution}~\}.$$
Clearly, $0\in E$ since $u=\log \left (\frac{\omega^n_0}{\Omega}\right)$ is an obvious solution. So $E$ is non-empty.

\begin{lemm}\label{lemm:2-1}
The set $E$ is open.
\end{lemm}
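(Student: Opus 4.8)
The plan is to show openness of $E$ by the implicit function theorem applied to the nonlinear operator defined by \eqref{eq:p-1}. Suppose $t_0 \in E$ with solution $u_0$, so that $\omega' := \tilde\omega_{t_0} + t_0 \sqrt{-1}\,\partial\bar\partial u_0 > 0$ is a genuine Kähler metric. For $t$ near $t_0$ consider the map $\Phi(t, u) = \log\!\big((\tilde\omega_t + t\sqrt{-1}\,\partial\bar\partial u)^n / \Omega\big) - u$, viewed as a map from (an open neighborhood of $t_0$ in $\RR$) times $C^{k+2,\alpha}(M)$ into $C^{k,\alpha}(M)$. Then $\Phi(t_0, u_0) = 0$, and solving \eqref{eq:p-1} for $t$ near $t_0$ amounts to finding a zero of $\Phi(t,\cdot)$. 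Note that the open condition $\tilde\omega_t + t\sqrt{-1}\,\partial\bar\partial u > 0$ is automatically preserved for $(t,u)$ close to $(t_0,u_0)$ in these norms, so it imposes no extra constraint.

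The key computation is the linearization in $u$ at $(t_0, u_0)$. Differentiating, for $v \in C^{k+2,\alpha}(M)$ one gets
\begin{equation}
\lab{eq:lin}
D_u\Phi_{(t_0,u_0)}(v)\,=\,t_0\,\Delta_{\omega'} v\,-\,v,
\end{equation}
where $\Delta_{\omega'}$ is the (negative) complex Laplacian of the Kähler metric $\omega'$; here one uses the standard identity $\delta\big((\omega'+s\sqrt{-1}\,\partial\bar\partial v)^n\big)/\omega'^n = n\,\sqrt{-1}\,\partial\bar\partial v \wedge \omega'^{n-1}/\omega'^n = \Delta_{\omega'} v$ at $s=0$. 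So the linearized operator is $L = t_0\Delta_{\omega'} - \mathrm{Id}$. Since $t_0 > 0$ and $\Delta_{\omega'}$ has non-positive spectrum, $L$ is a negative-definite, self-adjoint, second-order elliptic operator with no kernel: if $Lv = 0$ then integrating $v\,Lv$ against $\omega'^n$ gives $t_0\int |\nabla v|^2 + \int v^2 = 0$, hence $v \equiv 0$. By elliptic theory (Schauder estimates together with the Fredholm alternative), $L : C^{k+2,\alpha}(M) \to C^{k,\alpha}(M)$ is an isomorphism.

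With $D_u\Phi$ invertible at $(t_0, u_0)$ and $\Phi$ smooth in both variables (the dependence on $t$ is polynomial inside the $\log$, and the $\log$ is smooth since the argument stays positive), the implicit function theorem in Banach spaces yields, for $t$ in a neighborhood of $t_0$, a solution $u_t \in C^{k+2,\alpha}(M)$ of $\Phi(t, u_t) = 0$ depending smoothly on $t$; elliptic regularity bootstraps $u_t$ to be smooth. Hence a neighborhood of $t_0$ in $[0,\bar t]$ lies in $E$, so $E$ is open. I do not expect a serious obstacle here: the one point requiring care is that the zeroth-order term in \eqref{eq:lin} has the \emph{favorable} sign (because of the $-u$ on the right-hand side of \eqref{eq:p-1} combined with $t_0 > 0$), which is precisely what makes $L$ invertible without any further hypothesis — unlike in the Kähler–Ricci flow or Aubin–Yau setting, no positivity of a background class or Moser–Trudinger-type argument is needed at the openness stage. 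The only mild bookkeeping is to fix a Hölder space $C^{k+2,\alpha}$ with $k \geq 1$ (or work in Sobolev spaces) so that all maps land in the asserted spaces.
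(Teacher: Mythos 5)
Your argument is correct at any $t_0>0$, and there it coincides in substance with the paper's: the linearization of \eqref{eq:p-1} at a solution is $t_0\Delta_{\omega'}-\mathrm{Id}$, whose zeroth-order term has the favorable sign, so it is an isomorphism $C^{k+2,\alpha}\to C^{k,\alpha}$ and the implicit function theorem applies. But there is a genuine gap at $t_0=0$, and that is the case that matters: the only point known to lie in $E$ at the outset is $t=0$ (where $u=\log(\omega_0^n/\Omega)$ solves the equation algebraically), and $E\subset[0,\bar t]$ carries the relative topology, so proving openness requires showing $[0,\epsilon)\subset E$. Without this, the continuity method cannot even start, since closedness plus openness away from $0$ would give nothing unless some $t>0$ were already known to be in $E$.

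At $t_0=0$ your scheme breaks down: the linearized operator is $0\cdot\Delta-\mathrm{Id}=-\mathrm{Id}$, which, as a map $C^{k+2,\alpha}\to C^{k,\alpha}$, is injective but not surjective (its image is only $C^{k+2,\alpha}$), and one cannot repair this by shrinking the target, because for $t\neq 0$ the operator $\Phi(t,\cdot)$ genuinely involves two derivatives of $u$ and does not map into $C^{k+2,\alpha}$. The equation is a singular perturbation in $t$: the second-order term enters with coefficient $t$, so invertibility of the linearization does not hold uniformly as $t\to 0^+$ in any fixed pair of spaces, and the ``no serious obstacle'' expectation is exactly where the work lies. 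The paper handles this by substituting $u=\log(\omega_0^n/\Omega)+t^{-1}w$, rewriting the equation as \eqref{eq:p-1-1}, proving a quantitative solvability claim for $\Delta_0 v-t^{-1}v=tf$ with $t$-weighted bounds $t^{-1}\|v\|_{C^0}+\|v\|_{C^{2,\frac13}}\le C\,t^{\frac23}\|f\|_{C^{\frac13}}$, and then running an iteration whose contraction factor is $O(t^{\frac13})$, so that solutions exist for all sufficiently small $t>0$. To complete your proof you must either reproduce such a weighted estimate and fixed-point argument near $t=0$, or otherwise produce a solution at some strictly positive $t$ by independent means; the bare Banach-space implicit function theorem does not do it.
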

\begin{proof}
Assume that $t_1\in E$ and $u_1$ be a solution of \eqref{eq:p-1} with $t=t_1$. We want to solve \eqref{eq:p-1} for $t$ close to $t_1$.
If $t_1 > 0$, that is readily done Write $ t u = t_1 u _1 + w$ for some small $w$. Then \eqref{eq:p-1} becomes
$$  (\omega_1 \,-(t-t_1) \psi+ \,\sqrt{-1}\,\partial\bar\partial \,w)^n\,=\,e^{\frac{w}{t} - \frac{u_1\,(t-t_1) }{ t}}\,\omega_1^n,$$
where $\omega_1 = \omega_0-t_1\psi \,+\,t_1 \sqrt{-1}\,\partial \bar\partial \,u_1$.  
\noindent
In fact, $\Omega = e^{-u_1} \omega _1^n$ and $\omega _t= \omega _0 - t_1 \psi - (t-t_1)\psi + \sqrt{-1} \partial \bar \partial (t_1u_1+w)= \omega_1 \,-(t-t_1) \psi+ \,\sqrt{-1}\,\partial\bar\partial \,w$ and finally we have used that $ e^u\Omega= e^{\frac{t_1u_1+ w}{t}- u_1}\omega _1^n$.
Naturally, one still has that $ \omega_1 \,-(t-t_1) \psi>0 \text{ for } t-t_1 \text{ sufficiently small }$ and therefore, setting $\hat \omega _1 :=  \omega_1 \,-(t-t_1) \psi>0 $ 
 and choosing $F \text{ such that }\hat  \omega _1^n = e^F \omega _1^n$, one can write the equation above as:
   $$   (\hat \omega_1 \,+ \,\sqrt{-1}\,\partial\bar\partial \,w)^n\,=\,e^{\frac{w}{t} - \frac{u_1\,(t-t_1) }{ t}-F}\,\hat \omega_1^n,$$
 From now on, for the sake of notation we shall write $\omega_1$ instead of $\hat \omega _1$.

Expanding in $w$, we get
$$\Delta_1 w - t^{-1} w\,=\, \frac{u_1\,(t_1 -t)}{t}\,+\,Q(\nabla^2 w),$$
where $\Delta_1$ denotes the Laplacian of $\omega_1$ and $Q(a)$ denotes a polynomial in $a$ starting with a quadratic term.
Applying the Implicit Function Theorem, one can easily solve the above equation for $w$, consequently $u$,  for $t-t_1$ sufficiently small.

If $t_1=0$, we need to work a bit more carefully since the left side of
\eqref{eq:p-1} is degenerate. Write
$$ u \,=\, \log \left (\frac{\omega^n_0}{\Omega}\right)\,+\,t^{-1} w.$$
Then \eqref{eq:p-1} becomes
\begin{equation}
\label{eq:p-1-1}
\Delta_0 w \,-\,t^{-1} w \,=\,-\,t \,\Delta_0 \log \left (\frac{\omega^n_0}{\Omega}\right) \,+\,Q\left (t \nabla \log \left (\frac{\omega^n_0}{\Omega}\right) \,+\,\nabla ^2 w\right ),
\end{equation}
where $\Delta_0$ is the Laplacian of $\omega_0$. Put
$$A\,=\,\left |\left | \Delta_0 \log \left (\frac{\omega^n_0}{\Omega}\right)\right |\right |_{C^{\frac{1}{3}}}.$$

\vskip 0.1in
\noindent
{\bf Claim}: There is a uniform constant $C$ such that for any $f\in C^{\frac{1}{3}}$, there is a solution $v$ satisfying:
$$\Delta_0 v - t^{-1} v = t f~~~{\rm and}~~~t ^{-1} ||v||_{C^0} + || v||_{C^{2,\frac{1}{3}}} \le C  t^{\frac{2}{3}}\,||f||_{C^{\frac{1}{3}}}.$$
\begin{proof} Let us prove this claim. First by the Maximum Principle, $ |v| \le A t^2$, with $A:=\Vert f\Vert _{\infty}$. Then by standard elliptic theory, $||v||_{C^1} \le C' t$.
We can deduce from these: $t^{-1} ||v||_{C^{\frac{1}{3}}} \le C'' t^{\frac{2}{3}}$.\footnote{Both $C'$ and $ C''$ are uniform constants. }.
Then by elliptic theory again, we get $|| v||_{C^{2,\frac{1}{3}}} \le C '''A t^{\frac{2}{3}}$. The claim is proved.
\end{proof}
Now we can complete the proof of this lemma by standard iteration: Set $w_0=0$ and construct $w_i$ for $i\ge 1$ by solving the equation:
\begin{equation}
\label{eq:iterate}
\Delta_0 w_i \,-\,t^{-1} w_i \,=\,-\,t \,\Delta_0 \log \left (\frac{\omega^n_0}{\Omega}\right) \,+\,Q\left (t \nabla \log \left (\frac{\omega^n_0}{\Omega}\right) \,+\,\nabla ^2 w_{i-1}\right ).
\end{equation}
If $||w_{i-1}||_{C^{2,\frac{1}{3}}} \le C (1+A) t^{\frac{2}{3}}$, then for $t$ sufficiently small, the right side of \eqref{eq:iterate} is bounded
by $A\, t$, so
by the above claim, we get
$$||w_i||_{C^{2,\frac{1}{3}}}\, \le\, C\, (1+A)\, t^{\frac{2}{3}}.$$
Moreover, by the above claim,
we have
$$t^{-\frac{2}{3}}\,||w_{i+1}\,-\,w_{i}||_{C^{2,\frac{1}{3}}} \,\le\, C\,t^{\frac{1}{3}}\, \left(t^{-\frac{2}{3}}\,||w_i\,-\,w_{i-1}||_{C^{2,\frac{1}{3}}}\right ). $$
It follows that for sufficiently small $t$,
$w_i$ converge to a $C^2$-function $w$ which solves \eqref{eq:p-1-1}, so we get a solution for \eqref{eq:p-1} for $t$ sufficiently small, i.e., $E$ is open.

\end{proof}

It remains to prove that $E$ is closed.
Assume that $\{t_i\}\subset E$ is a sequence with
$\lim t_i = \bar t > 0$ and $u_i$ is the solution of \eqref{eq:p-1} with $t=t_i$. We want to prove $\bar t\in E$.
It amounts to getting an a priori $C^{2,1/2}$-estimate for $u_i$. By applying the Maximum Principle to \eqref{eq:p-1}, we have
$$ \sup_M \,|u_i| \,\le\, \sup_M\,\left | \,\log \left (\frac{\omega^n_0}{\Omega}\right) \right |.$$
So we have the $C^0$-estimate. For the $C^2$-estimate, it is easier to use the Schwartz-type estimate.

Using \eqref{eq:p-1} or equivalently, \eqref{eq:main-1}, we have
$${\rm Ric}(\omega_i)\, =\,\frac{1}{t_i}\, (\omega_0\,-\,\omega_i)\,\ge\,-\,\frac{1}{t_i}\,\omega_i,$$
where $\omega_i = \tilde \omega_{t_i}\,+\,t_i\,\sqrt{-1}\,\partial\bar\partial \,u_i$. then by standard computations, we have
$$\Delta_i \log {\rm tr}_{\omega_i}(\tilde \omega_{t_i}) \,\ge \, - a \, {\rm tr}_{\omega_i}(\tilde\omega_{t_i})\,-\,\frac{1}{t_i},$$
where $\Delta_i$ is the Laplacian of $\omega_i$ and $a_i$ is a positive upper bound of the bisectional curvature of $\tilde \omega_{t_i}$.
Since $\lim t_i =\bar t$ and $\{\tilde\omega_t\}$ is a smooth family of K\"ahler metrics for $t$ near $\bar t$, we have $a= \sup_i a_i < \infty$.

If we put $$v \,=\,\log {\rm tr}_{\omega_i}(\tilde \omega_{t_i})\, - \,(a+1)\, t_i \,u_i,$$
then it follows
$$ \Delta_i v \,\ge \, e^{v - (a +1)\,t_i\, c}\, - \,n (a+1)\,-\,\frac{1}{t_i},$$
where $c= \sup _i (- \inf_M u_i)$. Hence, by using the Maximum Principle, we can bound $v$ from above, so there is a uniform constant $C$ such that
$$ C^{-1}\,\omega_0\,\le\,\omega_i.$$
Using \eqref{eq:p-1}, we derive
\begin{equation}\label{eq:lap-est}
 C^{-1} \,\tilde \omega_{t_i}\,\le \, \omega_i\,\le\,C\,\tilde\omega_{t_i}.
 \end{equation}
Next, by applying Calabi's 3rd derivative estimate, (cf. \cite{yau}) \footnote{One may also use the $C^{2,\alpha}$-estimate (cf. \cite{evans}, \cite{tian84}).}
for a uniform constant $C'$, we have
$$||u_i||_{C^3}\,\le\,C'.$$
Thus, by taking a subsequence if necessary, $u_i$ converge to a $C^3$-solution of \eqref{eq:p-1} with $=\bar t$, i.e., $\bar t\in E$ and $E$
is closed. Theorem \ref{th:main-1} is proved.

\begin{coro}\label{coro:long-1}
If $K_M$ is nef, then \eqref{eq:p-1} has a unique solution for any $t > 0$.
\end{coro}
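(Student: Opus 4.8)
The plan is to reduce the existence statement to Theorem \ref{th:main-1} and to derive uniqueness from the maximum principle. First I would record that ``$K_M$ nef'' means exactly that the class $-c_1(M)=c_1(K_M)$ lies in the closure $\overline{\cK}$ of the K\"ahler cone $\cK\subset H^{1,1}(M,\RR)$. Since $\cK$ is an open convex cone one has $\cK+\overline{\cK}\subseteq\cK$: given a K\"ahler class $\alpha$ and a nef class $\beta$, pick a K\"ahler class $\beta'$ so close to $\beta$ that $\alpha+\beta-\beta'$ still lies in $\cK$, and then $\alpha+\beta=(\alpha+\beta-\beta')+\beta'$ is a sum of two K\"ahler classes, hence K\"ahler. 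Taking $\alpha=[\omega_0]$ and $\beta=t\,(-c_1(M))$, this shows $[\omega_0]-t\,c_1(M)>0$ for every $t\ge 0$, so the number $T$ defined in \eqref{eq:T} equals $+\infty$. Theorem \ref{th:main-1} then yields a smooth family $\omega_t$ solving \eqref{eq:main-1} on $M\times[0,\infty)$; for a fixed $t>0$, after choosing a representative $\psi$ of $c_1(M)$ with $\omega_0-t\psi>0$ and writing $\omega_t=\tilde\omega_t+t\sqrt{-1}\,\partial\bar\partial u_t$, the potential $u_t$ solves \eqref{eq:p-1}. (Recall that solvability of \eqref{eq:p-1} was already shown above to be independent of the choice of $\psi$.)

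For uniqueness I would fix $t>0$ and let $u,u'$ be two solutions of \eqref{eq:p-1}. At a point $x_0$ where $u-u'$ is maximal one has $\sqrt{-1}\,\partial\bar\partial(u-u')(x_0)\le 0$, hence at $x_0$
$$0\,<\,\tilde\omega_t+t\sqrt{-1}\,\partial\bar\partial u\,\le\,\tilde\omega_t+t\sqrt{-1}\,\partial\bar\partial u'$$
as Hermitian $(1,1)$-forms, both positive definite by the constraint in \eqref{eq:p-1}. Applying the map $\eta\mapsto\eta^n$, which is monotone on positive $(1,1)$-forms (as one sees by comparing determinants against a fixed positive reference form), gives $e^{u(x_0)}\Omega\le e^{u'(x_0)}\Omega$, i.e. $(u-u')(x_0)\le 0$; since $x_0$ maximizes $u-u'$ this forces $u\le u'$ on $M$, and exchanging the roles of $u$ and $u'$ gives $u\equiv u'$.

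I do not anticipate a genuine obstacle. The two points that deserve a line of care are the (standard) fact that the sum of a K\"ahler class and a nef class is again K\"ahler --- which is precisely what makes $T=\infty$ and hence produces a solution for every $t>0$ --- and, in the uniqueness step, the observation that both $(1,1)$-forms entering the comparison are positive definite, so that the inequality between them is preserved under taking top exterior powers.
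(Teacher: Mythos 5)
Your argument is correct and is essentially the intended one: since $K_M$ nef makes $[\omega_0]-t\,c_1(M)=[\omega_0]+t\,c_1(K_M)$ a K\"ahler class for every $t\ge 0$, the time $T$ in \eqref{eq:T} is infinite and Theorem \ref{th:main-1} (whose proof solves \eqref{eq:p-1} directly) gives existence for all $t>0$, exactly as the paper intends by stating the corollary right after that proof. The uniqueness step via comparison of two solutions at a maximum point of their difference, using monotonicity of the top power on positive $(1,1)$-forms, is the standard maximum-principle argument implicitly relied upon here, so no gap.
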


\section{Proof of Theorem \ref{th:main-2}}
First we observe
\begin{lemm}
\label{lemm:3-1}
For any solution $u$ of \eqref{eq:p-1} with $t \in (0,T)$, we have
$$\sup_M \,u \,\le\, \sup_M \,\log \left (\frac{\omega^n_t}{\Omega}\right),$$
i.e., $u$ is uniformly bounded from above.
\end{lemm}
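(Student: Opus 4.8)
The plan is a one-step maximum-principle argument on \eqref{eq:p-1}, entirely parallel to the $C^0$-bound used in the proof of Theorem \ref{th:main-1}. I read the right-hand side of the asserted inequality as involving the \emph{reference} form $\tilde\omega_t=\omega_0-t\psi$ (otherwise the bound is vacuous, since $\omega^n=e^u\Omega$ by \eqref{eq:p-1}). Fix $t\in(0,T)$ and a solution $u$; by elliptic regularity $u$ is smooth, and since $M$ is closed it attains its maximum at some $p\in M$, where the real $(1,1)$-form $\sqrt{-1}\,\partial\bar\partial u$ is negative semi-definite.

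The key observation is that at such a point $p$ the solution metric $\omega=\tilde\omega_t+t\sqrt{-1}\,\partial\bar\partial u$ satisfies $\omega(p)>0$, so that $\tilde\omega_t(p)=\omega(p)-t\sqrt{-1}\,\partial\bar\partial u(p)\ge\omega(p)>0$ as Hermitian forms; in particular $\tilde\omega_t^n(p)>0$ and $0<\omega(p)\le\tilde\omega_t(p)$. Monotonicity of the determinant on positive Hermitian matrices then gives $\omega^n(p)\le\tilde\omega_t^n(p)$. Evaluating \eqref{eq:p-1} at $p$ yields $e^{u(p)}\Omega(p)=\omega^n(p)\le\tilde\omega_t^n(p)$, hence
$$\sup_M u \,=\, u(p)\,\le\,\log\!\left(\frac{\tilde\omega_t^n}{\Omega}\right)\!(p)\,\le\,\sup_M\,\log\!\left(\frac{\tilde\omega_t^n}{\Omega}\right),$$
the right-hand side being understood over the open set $\{\tilde\omega_t^n>0\}$, which contains $p$. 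For the ``uniformly bounded from above'' clause I would then note that $t\mapsto\tilde\omega_t$ is a smooth family of closed $(1,1)$-forms over the compact interval $[0,T]$, so $\sup_M\log^{+}(\tilde\omega_t^n/\Omega)$ is bounded independently of $t\in(0,T)$.

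I do not expect any genuine obstacle here; the argument is a direct application of the maximum principle. The only point worth a sentence is the passage from $\sqrt{-1}\,\partial\bar\partial u(p)\le 0$ to $\omega^n(p)\le\tilde\omega_t^n(p)$, i.e.\ the fact that if $0<A\le B$ are Hermitian then $\det A\le\det B$ (apply $B^{-1/2}(\cdot)B^{-1/2}$ and note the resulting matrix has eigenvalues in $(0,1]$).
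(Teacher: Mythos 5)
Your proof is correct and is essentially the argument the paper intends: the lemma is stated without a written proof, and the paper's own $C^0$-bound in Section 2 is obtained by exactly this maximum-principle evaluation of \eqref{eq:p-1} at a maximum point of $u$, comparing $(\tilde\omega_t+t\sqrt{-1}\,\partial\bar\partial u)^n$ with the reference volume form there. Your reading of $\omega_t^n$ as $\tilde\omega_t^n$ (otherwise the inequality is vacuous) and the observation that $\tilde\omega_t(p)\ge\omega(p)>0$ at the maximum point, so the argument survives even when $\tilde\omega_t$ fails to be positive globally for $t$ near $T$, together with the uniformity in $t$ from smoothness of $\tilde\omega_t$ on $M\times[0,T]$, is precisely the right way to make the statement and its ``uniformly bounded from above'' clause precise.
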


The proof of Theorem \ref{th:main-2} is now quite standard. For simplicity, we assume that $M$ is a projective manifold and $[\omega_0] = 2\pi c_1(L)$
for some line bundle $L$. Under this assumption, we can use the following lemma due to Kodaira (cf. \cite{kawamata84}). The general case can be proved in an identical way
if we use instead Demailly-Paun's extension \cite{demaillypaun} of Kodaira's lemma to K\"ahler manifolds.

\begin{lemm}
\label{lemm:2-2}
Let $E$ be a divisor in a projective manifold $M$. If $E$ is
nef and big, then there is an effective $\RR$ divisor $D = \sum_i \,a_i D_i$, where $D_i$ are divisors and
$a_i$ are positive real numbers, such that $E - [D] > 0$,
where $[D]=\sum _i a_i [D_i]$ and each $[D_i]$ denotes the line bundle induced by $D_i$.
\end{lemm}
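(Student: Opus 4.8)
The plan is to run Kodaira's original argument: extract an effective divisor from the bigness of $E$ via a Serre-type restriction sequence, while keeping a bit of ampleness in reserve from a fixed very ample divisor.

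First I would fix a smooth very ample divisor $H\subset M$ and, for each positive integer $m$, restrict to $H$ to obtain the short exact sequence of sheaves
$$0\,\longrightarrow\,\cO_M(mE-H)\,\longrightarrow\,\cO_M(mE)\,\longrightarrow\,\cO_H\bl mE|_H\br\,\longrightarrow\,0.$$
Passing to global sections gives $h^0\bl M,\cO_M(mE-H)\br\,\ge\,h^0\bl M,\cO_M(mE)\br-h^0\bl H,\cO_H(mE|_H)\br$.

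Next I would estimate the two terms on the right. Since $E$ is big, by definition there is a constant $c>0$ with $h^0\bl M,\cO_M(mE)\br\ge c\,m^n$ for all large $m$; if one prefers to start only from ``$E$ nef with $E^n>0$'', this follows from asymptotic Riemann--Roch, $\chi(mE)=\frac{E^n}{n!}m^n+O(m^{n-1})$, together with the fact that for a nef line bundle the higher cohomology groups $H^i\bl M,\cO_M(mE)\br$, $i\ge 1$, have dimension $o(m^n)$ (e.g.\ by the holomorphic Morse inequalities). On the other hand $H$ has complex dimension $n-1$, so $h^0\bl H,\cO_H(mE|_H)\br\le C\,m^{n-1}$ for some constant $C$ and all $m$. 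Hence $h^0\bl M,\cO_M(mE-H)\br\ge c\,m^n-C\,m^{n-1}>0$ once $m$ is large enough, so $mE-H$ is linearly equivalent to an effective divisor $D'=\sum_i b_i D_i$ with the $b_i$ positive integers.

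Finally I would set $D=\frac1m D'=\sum_i\frac{b_i}{m}D_i$, an effective $\RR$-divisor. Then in $H^2(M,\RR)\cap H^{1,1}(M)$ one has
$$E-[D]\,=\,\frac1m\bl mE-[D']\br\,=\,\frac1m[H]\,>\,0,$$
since $H$ is ample, which is exactly the assertion. (In the general K\"ahler setting one would replace the section-counting step by Demailly--Paun's characterization of big cohomology classes, as noted in the text.) The only point that requires any care is the lower bound $h^0(mE)\ge c\,m^n$; everything else is routine bookkeeping with the exact sequence, and there is no genuine analytic obstacle here.
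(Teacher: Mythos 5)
Your argument is correct, but it follows a genuinely different route from the paper's. You give the classical section-counting proof of Kodaira's lemma: restrict to a very ample $H$, use the exact sequence $0\to\cO_M(mE-H)\to\cO_M(mE)\to\cO_H(mE|_H)\to 0$, bound $h^0(M,mE)$ from below by $c\,m^n$ (via bigness, or via asymptotic Riemann--Roch plus the $o(m^n)$ bound on higher cohomology of a nef bundle) and $h^0(H,mE|_H)$ from above by $C\,m^{n-1}$, conclude that $mE-H$ is effective for large $m$, and take $D=\frac1m(mE-H)$; this is complete, and in fact yields an effective $\QQ$-divisor, which is stronger than the $\RR$-divisor the statement asks for. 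The paper instead disposes of the lemma in one line by convex geometry of the positivity cones: the big cone is open and contains the ample cone, and $E$, being nef, lies in the closure of the ample cone, so being big it can be perturbed by a small ample class while staying big, whence the decomposition into an ample class plus an effective $\RR$-divisor. The trade-off is roughly this: your proof is elementary and self-contained but uses that $E$ is an honest integral divisor on a projective $M$ (so that $H^0(M,\cO_M(mE))$ makes sense), whereas the cone-theoretic phrasing of the paper works directly for real $(1,1)$-classes and is the formulation that carries over, via Demailly--Paun, to the general K\"ahler case that the paper invokes immediately afterwards -- a point you correctly flag in your parenthetical remark.
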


This follows from the openness of the big cone
of $M$ which clearly contains the positive cone and the fact that $E$ is in the closure
of the positive cone. Recall that the non-ample locus $B_+(E)$ of $E$ is defined to be the intersection of
$Supp(D)$, where $D$ ranges over all effective $\RR$-divisor such that $E - D$ is ample.
The above lemma implies that $B_+(E)$ is a subvariety whenever $E$ is big.

Now we take $E= L+  T K_M$ which is nef and big, so by the above lemma, for any
$x\in M\backslash B_+(E)$, there is an effective divisor $D = \sum_i \,a_i D_i $ as above
such that $L + T K_M - [D]$ is ample.

For each $i$, let $\sigma_i$ be the defining section of $D_i$ and $||\cdot||_i$ be a Hermitian norm on $[D_i]$.
Then $\sum_i a_i \log ||\sigma_i||_i^2$ is a well-defined function outside $Supp (D)\subset M$.
Since $L +  T K_X - [D]$ is ample, we can choose $||\cdot||_i$ such that
$$\tilde \omega_T\, +\, \sqrt{-1}\, \sum_i \,a_i \partial\bar\partial
\log ||\sigma_i||_i^2 > 0.$$
For simplicity, we will write
$$\log ||\sigma||^2 = \sum_i a_i \log ||\sigma_i||_i^2.$$
Formally, we regard $\sigma $ as a defining section of $D$ and $||\cdot ||$ as an norm on $[D]$.

Set
$$\tilde \omega_{t,D}\,=\, \tilde\omega_t\, +\, \sqrt{-1}\,\partial\bar\partial\, \log ||\sigma||^2.$$
Then there is a $\delta=\delta(D)$, which may depend on $D$, such that $\omega_{t,D}$ is a
smooth K\"ahler metric for any $t\in [T-\delta, T+ \delta]$. For any solution $u$ of \eqref{eq:p-1} with $t\in [T-\delta, T]$, we put
$$v\, =\,  t\, u\, - \,\log ||\sigma||^2,$$
then $\omega_t = \tilde \omega_{t,D} + \sqrt{-1}\,\partial\bar\partial\, v$ and $v$ satisfies the following equation:
\begin{equation}
\lab{eq:p-3}
(\tilde\omega_{t,D}\,+\, \sqrt{-1}\,\partial\bar\partial \,v)^n\,=\,e^{\frac{1}{t} (v + \log ||\sigma||^2)}\, \Omega.
\end{equation}
Applying the Maximum Principle to \eqref{eq:p-3}, we get $ v\,\ge\, -c $ for a uniform constant which may depend on $D$.
So by changing $c$ if necessary, we have
$$-c \,\le\, v\,\le\, c - \log ||\sigma||^2.$$
In particular, $v$ or equivalently $u$, is bounded outside $D$. Since ${\rm Ric}(\omega_t)$ is bounded below by $-1/t$, as in last section, we can infer
$$\Delta_t \log {\rm tr}_{\omega_t}(\tilde \omega_{t,D}) \,\ge \, - a \, {\rm tr}_{\omega_t}(\tilde \omega_{t,D})\,-\,\frac{1}{t},$$
where $\Delta_t$ denotes the Laplacian of $\omega_t$ and $a$ is a positive upper bound on the bisectional curvature of $\tilde \omega_{t,D}$
for all $t\in [T-\delta,T]$.
Put
$$w \,=\,\log {\rm tr}_{\omega_t}(\tilde \omega_{t,D})\, - \,(a+1) \,v.$$
Then we have
$$ \Delta_t w \,\ge \, e^{w - (a+1)\, b}\, - \,n (a+1)\,-\,\frac{1}{t},$$
where $b = - \inf_M \,v$.
Hence, by using the Maximum Principle, we can bound $w$ from above, so there is a uniform constant $C$ such that
$$ C^{-1}\,||\sigma||^{2(a+1)}\,\tilde \omega_{t,D}\,\le\,\omega_t.$$
Using \eqref{eq:p-3}, we derive
\begin{equation}\label{eq:lap-est-2}
 C^{-1} \,||\sigma||^{2(a+1)}\,\tilde\omega_{t,D}\,\le \, \omega_t\,\le\,C\,||\sigma||^{-2 (n-1)(a+1) + \frac{1}{t}}\,\tilde \omega_{t,D}.
\end{equation}
Then by Calabi's 3rd derivative estimate, for any compact subset $K\subset M\backslash D$, we have $||u||_{C^3(K)}\,\le\,C_K$ for some
uniform constant $C_K$. It follows that any sequence $\{t_i\}$ with $t_i\to T$ has a subsequence, still denoted by $t_i$ for simplicity, such that
$v_{t_i}$ converge a $C^3$-function $v_t$ on $M\backslash D$ satisfying:
\begin{equation}
\lab{eq:p-3'}
(\tilde\omega_{T,D}\,+\, \sqrt{-1}\,\partial\bar\partial \,v_T)^n\,=\,e^{\frac{1}{T} (v_T + \log ||\sigma||^2)}\, \Omega.
\end{equation}
A priori, this limit may not be unique. so we still need to prove that $v_T$ is unique, i.e., independent of the sequence $\{t_i\}$.
\begin{lemm}
\label{lemm:2-3}
Let $\dot v$ be the derivative of $v$ in the $t$-direction and $T < \infty$. Then there is a uniform constant $C_T$, which may depend on $T$, such that
\begin{equation}\label{eq:t-derivative}
\int_M \,|\dot v|^2 \,\omega_t^n\,\le\, C_T ~~~{\rm on}~~t\in (T-\delta, T).
\end{equation}
\end{lemm}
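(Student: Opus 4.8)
The plan is to differentiate the complex Monge--Amp\`ere equation \eqref{eq:p-3} in $t$, which turns it into a linear elliptic equation for $\dot v$ with a favourably signed zeroth order term, and then to close an energy estimate against $\dot v\,\omega_t^n$. Writing $\omega_t=\tilde\omega_{t,D}+\sqrt{-1}\,\partial\bar\partial v$ and recalling $v+\log\|\sigma\|^2=t\,u$, equation \eqref{eq:p-3} reads $\omega_t^n=e^{(v+\log\|\sigma\|^2)/t}\,\Omega$; taking $\log$ and differentiating in $t$ (so that $\partial_t\tilde\omega_{t,D}=-\psi$ and $\log\|\sigma\|^2$ drops out) gives
$$
\Delta_t\dot v-\frac1t\,\dot v \;=\; {\rm tr}_{\omega_t}(\psi)-\frac1{t^2}\bigl(v+\log\|\sigma\|^2\bigr),
$$
with $\Delta_t$ the Laplacian of $\omega_t$. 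Using the identity $\psi+\frac1t\tilde\omega_t=\frac1t\omega_0$ and $\dot v=u+t\dot u$, this is equivalent to the cleaner equation for $\dot u$,
$$
\Delta_t\dot u-\frac1t\,\dot u \;=\; \frac1{t^2}\bigl({\rm tr}_{\omega_t}(\omega_0)-n\bigr)\;=\;\frac1t\,S_t ,
$$
where $S_t=\frac1t\bigl({\rm tr}_{\omega_t}(\omega_0)-n\bigr)$ is the scalar curvature of $\omega_t$. Since ${\rm tr}_{\omega_t}(\omega_0)\ge0$, the maximum principle gives $\dot u\le n/t$, and hence, with $\sup_M u\le C$ from Lemma \ref{lemm:3-1}, a uniform upper bound $\dot v=u+t\dot u\le C$ on $(T-\delta,T)$.

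I would then record two $L^1$-type bounds. Integrating the $\dot u$-equation against $\omega_t^n$ and using ${\rm tr}_{\omega_t}(\omega_0)\,\omega_t^n=n\,\omega_0\wedge\omega_t^{n-1}$, the right-hand side becomes a fixed combination of the intersection numbers $[\omega_0]\cdot[\omega_t]^{n-1}$ and $[\omega_t]^n$, which stay bounded as $t\to T$; hence $\int_M\dot u\,\omega_t^n$ is uniformly bounded, and together with $\dot u\le n/t$ this gives $\|\dot u\|_{L^1(\omega_t^n)}\le C$. Moreover, since $u$ is bounded from above the function $u^2e^u$ is bounded, so $\int_M u^2\,\omega_t^n=\int_M u^2e^u\,\Omega$ is uniformly bounded; as $\dot v^2\le 2u^2+2t^2\dot u^2$, it therefore suffices to bound $\int_M\dot u^2\,\omega_t^n$.

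For the energy estimate, multiply $t\,\Delta_t\dot u-\dot u=S_t$ by the negative part $(\dot u)_-=\max(-\dot u,0)\ge0$ and integrate by parts over $M$ (no boundary term, as $\dot u$ is smooth on $M$ for $t<T$):
$$
\int_M(\dot u)_-^2\,\omega_t^n\;+\;t\int_M|\nabla(\dot u)_-|_{\omega_t}^2\,\omega_t^n\;=\;\int_M(\dot u)_-\,S_t\,\omega_t^n\;\le\;\int_M(\dot u)_-\,(S_t)_+\,\omega_t^n .
$$
Since $(\dot u)_+\le n/t$ is bounded, $\int_M(\dot u)_+^2\,\omega_t^n$ is bounded and it remains to control the right-hand side. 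On $\{S_t>0\}$ one has $(S_t)_+=S_t\le\frac1t\,{\rm tr}_{\omega_t}(\omega_0)$; since $\omega_0\le\Lambda\,\tilde\omega_{t,D}$ for a uniform $\Lambda$ (both are fixed smooth K\"ahler metrics and $t$ runs over a compact interval), the identity ${\rm tr}_{\omega_t}(\tilde\omega_{t,D})=n-\Delta_t v$ gives
$$
\int_M(\dot u)_-\,(S_t)_+\,\omega_t^n\;\le\;\frac{n\Lambda}t\int_M(\dot u)_-\,\omega_t^n\;-\;\frac\Lambda t\int_M(\dot u)_-\,\Delta_t v\,\omega_t^n .
$$
The first term is controlled by $\|\dot u\|_{L^1(\omega_t^n)}$; integrating by parts in the second (now legitimate because $v$ has at worst a logarithmic singularity along $D$, so the boundary integrals over shrinking tubes about $D$ vanish) and using Cauchy--Schwarz, it is bounded by $\frac\Lambda t\bigl(\int_M|\nabla(\dot u)_-|_{\omega_t}^2\omega_t^n\bigr)^{1/2}\bigl(\int_M|\nabla v|_{\omega_t}^2\omega_t^n\bigr)^{1/2}$. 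Assuming the gradient estimate $\int_M|\nabla v|_{\omega_t}^2\,\omega_t^n\le C$, and feeding back the displayed energy identity (which bounds $t\int_M|\nabla(\dot u)_-|_{\omega_t}^2\omega_t^n$ by $Y:=\int_M(\dot u)_-(S_t)_+\,\omega_t^n$), one arrives at an inequality of the form $Y\le C+C\,t^{-3/2}Y^{1/2}$, hence $Y\le C_T$. Then $\int_M(\dot u)_-^2\,\omega_t^n\le Y\le C_T$, so $\int_M\dot u^2\,\omega_t^n\le C_T$, and finally $\int_M\dot v^2\,\omega_t^n\le C_T$.

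The main obstacle is concentrated near the divisor $D$: justifying the integrations by parts above, the finiteness of $\int_M(\dot u)_-\,{\rm tr}_{\omega_t}(\tilde\omega_{t,D})\,\omega_t^n$, and, above all, the gradient estimate $\int_M|\nabla v|_{\omega_t}^2\,\omega_t^n\le C$. Near $D$ the function $v$ blows up like $-\log\|\sigma\|^2$ and $\omega_t$ degenerates, so these have to be extracted from the two-sided Laplacian bound \eqref{eq:lap-est-2} together with the pinching $-c\le v\le c-\log\|\sigma\|^2$ established above. The favourable sign of the term $-\frac1t\dot v$ in the differentiated equation is exactly what makes the energy estimate coercive and lets the argument be closed.
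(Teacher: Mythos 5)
Your starting point coincides with the paper's --- differentiate \eqref{eq:p-3} in $t$ and exploit the favourably signed zeroth--order term $-\tfrac1t\dot v$ --- but from there you take a long detour that does not close. The decisive step you are missing is the choice of test function: the paper multiplies the differentiated equation by $\dot v$ itself and integrates against $\omega_t^n$, obtaining the identity
$$\int_M\left(|\nabla\dot v|^2+\frac1t|\dot v|^2\right)\omega_t^n\,=\,\frac1{t^2}\int_M\dot v\,u\,\omega_t^n,$$
after which the lemma is immediate from the Cauchy inequality: the right side is at most $\frac1{2t}\int_M|\dot v|^2\omega_t^n+\frac1{2t^3}\int_M u^2\omega_t^n$, the first piece is absorbed into the left-hand side, and the second is uniformly bounded because $\int_M u^2\,\omega_t^n=\int_M u^2e^u\,\Omega\le C$, since $s\mapsto s^2e^s$ is bounded on $(-\infty,C]$ and $u$ is uniformly bounded above by Lemma \ref{lemm:3-1}. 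No gradient bound on $v$, no $L^1$ bound on $\dot u$, no splitting into positive and negative parts, and no maximum principle are needed; moreover $\dot v=u+t\dot u$ is smooth on all of $M$ for $t<T$, so the single integration by parts is global and there is no issue at $D$.

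By contrast, your argument bottoms out in the estimate $\int_M|\nabla v|^2_{\omega_t}\,\omega_t^n\le C$, which you assume rather than prove, and which is genuinely delicate: $v\sim-\log\|\sigma\|^2$ blows up along $D$ while $\omega_t$ degenerates there, so this does not follow from \eqref{eq:lap-est-2} and the two-sided bound on $v$ without further work. The same applies to the integrations by parts over shrinking tubes about $D$ and to the finiteness of $\int_M(\dot u)_-\,{\rm tr}_{\omega_t}(\tilde\omega_{t,D})\,\omega_t^n$, which you flag but do not establish. As written the proof is therefore incomplete, even though your preliminary algebra is correct (indeed your differentiated equation properly retains the ${\rm tr}_{\omega_t}(\psi)$ contribution from $\partial_t\tilde\omega_{t,D}=-\psi$, which can then be eliminated exactly as in your $\dot u$-form of the equation); what is missing is the simple observation that testing against $\dot v$ and using the upper bound on $u$ alone already yields the coercive energy inequality.
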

\begin{proof}
Differentiating \eqref{eq:p-3} on $t$, we get
$$\Delta_t \dot v\,=\,-\frac{1}{t^2} \,u \,+\,\frac{1}{t}\,\dot v.$$
Note that $\dot v$ is equal to $t \dot u + u $ which is smooth on $M\times (0, T)$
since $\log ||\sigma||^2$ is independent of $t$.
Since $u$ is bounded from above, we deduce from \eqref{eq:p-1}
$$\int_M \,|u|^2\, \omega_t^n\,=\,\int_M\,u^2\, e^u\,\Omega\,\le\,C,$$
where $C$ is a uniform constant. Then we have
\begin{equation}\label{eq:t-derivative2}
\int_M \,\left (|\nabla \dot v|^2\,+\,\frac{1}{t}\,|\dot v|^2\right )  \,\omega_t^n\,=\,\frac{1}{t^2}\,\int_M\, {\dot v}\,u\,\omega_t^n.
\end{equation}
Then \eqref{eq:t-derivative} follows easily from this and the Cauchy inequality.
\end{proof}
For any compact subset $K\subset M\backslash D$, by \eqref{eq:lap-est-2}, we have that for some $C_K > 0$,
$$C_K^{-1}\,\omega_0\,\le\omega_t\,\le\,C_K\,\omega_0.$$
Therefore, we have for $T-\delta < t <t'<T$,
$$\int_K\,|v(x,t) - v(x,t')|^2\,\omega_0^n(x)\,\le\,\int_t^{t'} \int_K\,|\dot v (\cdot,s)|^2\,\omega_0^n \,ds\,\le\,C_T (C_K)^n\,|t'-t|.$$
It follows that $v(\cdot,t)$ and $v(\cdot,t')$ converge to the same function $v_T$ on $M\backslash D$.

Since $D$ is any divisor in the definition of $B_+(E)$, where $E\,=\,L + T\,K_M$, we have proved Theorem \ref{th:main-2} with ${\cal S}\,=\,B_+(E)$.

\section{Analytic Minimal Model Program revisited}

In this section, we follow the lines of the approach towards the Analytic Minimal Model Program through Ricci flow (cf.
\cite{tian07}, \cite{songtian07}, \cite{songtian09} et al)
to list some problems and speculations.
Some of these problems are doable by adapting arguments from what has been done for the K\"ahler-Ricci flow (cf. above citations). It is possible to get even stronger results because
Ricci curvature is bounded from below in this new approach through the continuity method.

Let $u_t$ be the maximal solution of \eqref{eq:p-1} for $t\in(0,T)$ and write 
$$\omega_t=\omega_0-t\psi+\sqrt{-1}\,\partial\bar\partial u_t.$$
First we assume $T < \infty$.

\begin{conj}
\label{conj-1}
As $t\to T$, $(M, \omega_t)$ converges to a compact metric space $(M_T, d_T)$ in the Gromov-Hausdorff topology
\footnote{Since $\omega_t$ has Ricci curvature bounded from below, this is equivalent to that the diameter of $(M,\omega_t)$
is uniformly bounded.} satisfying the following:

\vskip 0.1in
\noindent
(1) $M_T$ is a K\"ahler variety and there is a holomorphic fibration $\pi_T: M\mapsto M_T$;

\vskip 0.1in
\noindent
(2) $d_T$ is a ``nice'' K\"ahler metric $\omega_T$ on $M_T\backslash {\cal S_T}$, where ${\cal S}_T$ is a subvariety of $M_T$ containing all the singular points.
If $([\omega_0] - T\,c_1(M))^n >0$, then it is the same as saying that $\omega_T$ is the limit given in Theorem \ref{th:main-2}
and $(M_T, d_T)$ is the metric completion of $(M\backslash {\cal S}, \omega_T)$;

\vskip 0.1in
\noindent
(3) $\omega_t$ converge to $\omega_T$ on $\pi^{-1}(M_T\backslash {\cal S}_T)$ in a much regular topology, possibly, the smooth topology.
\end{conj}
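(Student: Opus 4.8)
The statement rests on a single analytic input: a uniform bound $\mathrm{diam}(M,\omega_t)\le D_0$ for $t$ near $T$. Indeed $\mathrm{Ric}(\omega_t)\ge -t^{-1}\omega_t\ge-(T-\delta)^{-1}\omega_t$ on $[T-\delta,T)$, so once the diameter is controlled, Gromov precompactness yields, for every sequence $t_i\to T$, a subsequence along which $(M,\omega_{t_i})$ converges in the Gromov--Hausdorff topology to a compact length space $(M_T,d_T)$; and, the Ricci lower bound being \emph{uniform}, the Cheeger--Colding--Tian structure theory and the partial $C^0$-estimate apply on $(M,\omega_{t_i})$. So I regard the diameter estimate as the crux and the main obstacle, and I would first prove the conjecture under the extra hypothesis $([\omega_0]-Tc_1(M))^n>0$; the remaining steps are then a matter of adapting the K\"ahler--Ricci flow machinery (cf. \cite{songtian09} and the references in \S1).

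\emph{The diameter bound.} Write $E:=[\omega_0]-Tc_1(M)=[\omega_0]+TK_M$. When $E^n>0$ the volumes $\int_M\omega_t^n=([\omega_0]-tc_1(M))^n$ are uniformly bounded both above and below near $T$, so $(M,\omega_t)$ is non-collapsed. Away from the base locus the geometry is controlled: Lemma \ref{lemm:3-1} bounds $u_t$ from above, \eqref{eq:lap-est-2} gives a two-sided metric equivalence on $M\setminus D$, and Calabi's estimate yields uniform $C^3$ bounds on compact subsets of $M\setminus D$. What is missing is a \emph{uniform} control of $\omega_t$-geodesic distances as one approaches the support of $D$. The natural line of attack is a ``tube lemma'': the $\omega_t$-distance from a generic point to the region $\{\,\|\sigma\|\ge\varepsilon\,\}$ should stay bounded as $\varepsilon\to0$, uniformly in $t$; combined with Lemma \ref{lemm:2-2} applied to each component of $B_+(E)$, and with the fact that $\tilde\omega_{t,D}$ dominates a fixed K\"ahler form in the ample class $E-[D]$, this would bound $\mathrm{diam}(M,\omega_t)$. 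Its proof requires estimates near $D$ sharper than the crude $\log\|\sigma\|^2$-twisted ones of \S3 — note that the upper bound in \eqref{eq:lap-est-2} in fact degenerates along $D$ — and this is where I expect the real difficulty. A natural, but itself nontrivial, alternative (as in the K\"ahler--Ricci flow treatments) is to derive a uniform Sobolev inequality for $\omega_t$ and then bound the diameter by a Moser iteration / heat-kernel argument. Either way, the task is to \emph{globalise} the local estimates of \S3. The collapsed case $E^n=0$ is more delicate — the limit is lower-dimensional, as for collapsing K\"ahler--Ricci flow — and I would treat it afterwards by fibration arguments.

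\emph{Identifying the limit.} With the diameter bound in hand, apply the partial $C^0$-estimate — legitimate precisely because $\mathrm{Ric}(\omega_t)\ge-C$ and, in the big case, the limit is non-collapsed — to the K\"ahler classes $[\omega_t]\to E$: for $\ell\gg1$ this produces enough global holomorphic sections of a fixed ample bundle approximating $\ell E$ (in the non-projective case one argues as in \S3, via the Demailly--Paun extension of Kodaira's lemma) to define a holomorphic map whose Stein factorisation is a fibration $\pi_T\colon M\to M_T$ onto a normal K\"ahler variety, with $\cS_T$ the image of $B_+(E)$ together with the exceptional locus of $\pi_T$; this gives (1) and identifies the Gromov--Hausdorff limit with $M_T$, the fibres of $\pi_T$ being collapsed — in particular the limit is independent of the chosen sequence. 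When $E^n>0$, $\pi_T$ is birational, $\cS=\pi_T^{-1}(\cS_T)=B_+(E)$, the metric that $d_T$ induces on $M_T\setminus\cS_T$ is precisely the weak K\"ahler metric $\omega_T$ of Theorem \ref{th:main-2}, and the estimate $\int_K|v(\cdot,t)-v(\cdot,t')|^2\omega_0^n\le C_T(C_K)^n|t'-t|$ established at the end of \S3 (which rests on Lemma \ref{lemm:2-3}) identifies $(M_T,d_T)$ with the metric completion of $(M\setminus\cS,\omega_T)$; this gives (2). Finally, for (3): on any compact $K\subset\pi_T^{-1}(M_T\setminus\cS_T)$ the uniform $C^3$ bounds of \S3, bootstrapped through \eqref{eq:p-1}, yield uniform $C^\infty$ bounds, so $\omega_t\to\omega_T$ in $C^\infty_{\mathrm{loc}}$ there, with uniqueness of the limit furnished by Lemma \ref{lemm:2-3} exactly as in the proof of Theorem \ref{th:main-2}.
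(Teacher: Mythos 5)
You should be aware that the statement you were asked to prove is Conjecture \ref{conj-1} of the paper: the authors offer no proof of it, and their only comment is precisely the point your proposal isolates, namely that ``the key for solving this conjecture is to bound the diameter of $(M,\omega_t)$, especially, in the non-collapsing case.'' So your assessment of where the difficulty lies agrees with the paper's. But your text is a programme, not a proof, and the central step is missing: the uniform diameter bound is nowhere established. The ``tube lemma'' you invoke is exactly what the estimates of Section 3 do not give --- the upper bound in \eqref{eq:lap-est-2} degenerates like $\|\sigma\|^{-2(n-1)(a+1)+\frac{1}{t}}$ along $D$, so the $\omega_t$-length of a path approaching or crossing ${\rm Supp}(D)$ is not controlled, and the lower bound only prevents collapse away from $D$; the alternative route via a uniform Sobolev inequality and Moser iteration is likewise asserted rather than proved. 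Since everything downstream (Gromov precompactness giving a compact limit, Cheeger--Colding--Tian structure theory, the partial $C^0$-estimate) is conditioned on this bound, the argument does not get off the ground without it.

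Two further gaps would remain even granting the diameter bound. First, the application of the partial $C^0$-estimate to produce the fibration $\pi_T$ in part (1) is not off-the-shelf: the classes $[\omega_t]=[\omega_0]-t\,c_1(M)$ are real $(1,1)$-classes varying with $t$ and need not be rational, and the existing partial $C^0$-estimates require a polarization (a fixed line bundle with metrics of controlled curvature) together with non-collapsing; in the collapsed case $E^n=0$ --- which the conjecture also covers and which is where the fibration structure of (1) is really needed --- no such estimate is available, and you explicitly defer this case. Second, the uniqueness/completion statement in (2): the $L^2$-continuity estimate from Lemma \ref{lemm:2-3} identifies the limit potential $v_T$ on $M\backslash\cS$ as in Theorem \ref{th:main-2}, but identifying the Gromov--Hausdorff limit $(M_T,d_T)$ with the metric completion of $(M\backslash\cS,\omega_T)$ requires in addition that $\cS$ contributes no extra limit points at positive distance and that distances computed in $(M,\omega_t)$ converge to distances of $\omega_T$ on $M\backslash\cS$; this is again a global metric statement of the same nature as the diameter bound and is not implied by the local $C^3$ (or $C^\infty$) convergence you cite. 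In short: your proposal is a reasonable roadmap, consistent with the paper's own expectations, but the diameter estimate, the collapsed case, and the metric identification of the limit are genuine open gaps, which is why the paper states this as a conjecture.
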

It seems that the key for solving this conjecture is to bound the diameter of $(M,\omega_t)$, especially, in the non-collapsing case.

Next we want to examine how to extend our continuity method beyond $T$. To this end, we propose:

\begin{conj}
\label{conj-2}
The variety $M_T$ has a partial resolution $\pi'_T: M'\mapsto M_T$, which may have ``mild'' singularity, such that the canonical sheaf
$K_{M'}$ is well-defined and $\pi'^\*_T(\pi_{T\*} [\omega_T]) + t K_{M'} > 0$ for  $t>0$ small.
If $\dim M_T =\dim M$, i.e., in the non-collapsing case, $M'$ should be a flip of $M$ as defined in algebraic geometry.
\end{conj}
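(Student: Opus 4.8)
\medskip
\noindent\textbf{A line of attack.} The plan, at least in the non-collapsing case $([\omega_0]-Tc_1(M))^n>0$, is to identify the continuity method \eqref{eq:main-1} with the $K_M$-minimal model program with scaling of $[\omega_0]$, and to read $M'$ off from it. Keep the simplifying hypothesis of Section~3 that $M$ is projective with $[\omega_0]=2\pi c_1(L)$, $L$ ample (the general K\"ahler case via \cite{demaillypaun} as before). Set $E=L+TK_M$, so $[\omega_T]=2\pi c_1(E)$. As noted in Section~3, $E$ is nef (a limit of K\"ahler classes) and big (the non-collapsing hypothesis). Moreover $E-\delta K_M=L+(T-\delta)K_M$ is ample for $0<\delta\ll 1$ since $L$ is ample; hence by the base-point-free theorem $E$ is semiample and defines a birational morphism $\pi_T\colon M\to M_T=\proj\bigoplus_m H^0(M,mE)$, with $\pi_{T*}[\omega_T]=2\pi c_1(A_T)$ for an ample $\QQ$-line bundle $A_T$ on $M_T$ (conjecturally the fibration of Conjecture~\ref{conj-1}). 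By the definition of $T$ and openness of the K\"ahler cone, $E$ is not ample, so $\pi_T$ is not an isomorphism; any curve $C$ contracted by $\pi_T$ has $E\cdot C=0$, hence $K_M\cdot C=-\tfrac1T L\cdot C<0$, i.e. $\pi_T$ is a $K_M$-negative contraction.

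Next I would split on the type of $\pi_T$. If $\pi_T$ is divisorial, then $M_T$ is $\QQ$-factorial with terminal (hence ``mild'') singularities and $K_{M_T}$ is $\QQ$-Cartier; take $M'=M_T$, $\pi'_T=\mathrm{id}$. If $\pi_T$ is small, then $K_{M_T}$ is not $\QQ$-Cartier and I would invoke the existence of the flip $\pi'_T\colon M'=M^+\to M_T$ (known in the projective setting, and what one would need in the K\"ahler setting), so that $M^+$ is $\QQ$-factorial with terminal singularities, $\pi'_T$ is an isomorphism in codimension one, and $K_{M^+}$ is $\pi'_T$-ample; by construction $M^+$ is then a flip of $M$. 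In both cases $K_{M'}$ is a well-defined $\QQ$-Cartier class, and it remains to verify the positivity $\pi'^*_T(\pi_{T*}[\omega_T])+t\,K_{M'}=2\pi c_1\!\big(\pi'^*_TA_T+tK_{M'}\big)$ for small $t>0$. In the divisorial case $\pi'^*_TA_T=A_T$ is ample, hence stays ample for $t$ small. In the flip case $\pi'^*_TA_T$ is the pullback of an ample class and $K_{M^+}$ is $\pi'_T$-ample, so $\pi'^*_TA_T+tK_{M^+}=t\big(\pi'^*_T(\tfrac1tA_T)+K_{M^+}\big)$ is ample for all sufficiently small $t>0$ by the standard relative ampleness criterion. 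This gives the conjectured inequality in the non-collapsing case, with ``mild'' read as terminal.

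The main obstacle is that the above is entirely algebro-geometric, while the content of the conjecture is the bridge to the analysis. One must first establish enough of Conjecture~\ref{conj-1} to know that the Gromov--Hausdorff limit of $(M,\omega_t)$ really is $M_T$ and that $\omega_T$ descends to a K\"ahler current on $M_T$ in the class $\pi_{T*}[\omega_T]$ --- which needs the diameter bound and the partial $C^0$-estimate mentioned in the introduction. Granting that, one then has to re-run Theorems~\ref{th:main-1}--\ref{th:main-2} starting from the mildly singular pair $(M',\pi'^*_T\pi_{T*}[\omega_T])$: extending the openness/closedness argument of Section~2 and the weighted estimates \eqref{eq:lap-est-2} of Section~3 (which are already local away from the divisor $D$ and carry $\|\sigma\|$-weights) to the terminal/klt setting, so that the continuity equation can be continued past $T$. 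The collapsing case $\dim M_T<\dim M$ is more open still: there $\pi_T$ is a fiber contraction, ``partial resolution with mild singularities'' must be pinned down on $M_T$ itself, and the positivity $\pi'^*_T(\pi_{T*}[\omega_T])+tK_{M'}>0$ should be understood relative to the lower-dimensional program one wishes to run on the base. Finally, in the genuinely non-projective K\"ahler case one needs the analytic MMP, in particular the existence of flips, in that generality --- or one works projectively throughout.
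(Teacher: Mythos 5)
The statement you were asked about is Conjecture \ref{conj-2}; the paper gives no proof of it and does not claim one --- it is stated as part of the program in Section 4 and is explicitly flagged there as possibly the most difficult step of the whole approach, with only the remark (via \cite{songtian09}) that log-terminal singularities should be the right meaning of ``mild''. So there is nothing in the paper to compare your argument against, and your text can only be judged as an attack on an open problem. On those terms, the algebro-geometric skeleton you give in the projective, non-collapsing case is the standard picture motivating the conjecture: with $[\omega_0]=2\pi c_1(L)$ and $E=L+TK_M$ nef and big, $\tfrac{1}{T}E-K_M=\tfrac{1}{T}L$ is ample, so (granted $T\in\QQ$, which here follows from Kawamata's rationality theorem --- you use this silently when treating $E$ as a $\QQ$-line bundle) the base-point-free theorem makes $E$ semiample, $\pi_T\colon M\to M_T=\proj\bigoplus_m H^0(M,mE)$ is a $K_M$-negative birational contraction, existence of flips for klt pairs (Birkar--Cascini--Hacon--McKernan) supplies $M^+$ when $\pi_T$ is small, and your relative-ampleness argument for $\pi_T'^*A_T+tK_{M'}>0$ with $0<t\ll 1$ is correct. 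Two caveats: the contraction defined by $E$ may contract a whole extremal face rather than a single ray, so ``divisorial or small'' is not an exhaustive dichotomy as stated (one should rather pass to the relative canonical model of $M$ over $M_T$), and in such mixed cases $M'$ need not literally be ``a flip of $M$'' in the single-ray sense.

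The genuine gap --- which, to your credit, you name yourself --- is that the conjecture is not an assertion of pure birational geometry. Its content is that the variety $M_T$ produced \emph{analytically} in Conjecture \ref{conj-1}, as a Gromov--Hausdorff limit carrying the pushed-forward class $\pi_{T*}[\omega_T]$, admits such a modification in the K\"ahler and possibly collapsing setting, in a form usable for Conjecture \ref{conj-3}, i.e.\ for restarting the continuity method \eqref{eq:p-1} on $M'$ with the singular initial class $\pi_T'^*(\pi_{T*}[\omega_T])$. None of the analytic inputs this requires (a diameter bound, the partial $C^0$-estimate, identification of the metric limit with $\proj$ of the section ring, the collapsing case $\dim M_T<\dim M$, and the non-projective K\"ahler case where even the MMP steps you invoke are unknown) is established in the paper or in your sketch. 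So your proposal should be read as evidence for the conjecture in the projective non-collapsing case together with an accurate inventory of what remains open, not as a proof --- which is consistent with the paper, where the statement is deliberately left as a conjecture.
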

This may be the most difficult part of the program as we have learned from the Analytic Minimal Model Program through Ricci flow.
More precisely, by ``mild'' singularity, we mean the following
\begin{conj}
\label{conj-3}
Let $\omega_0' =  \pi'^*_T(\pi_{T*} \omega_T)$. Then we can solve \eqref{eq:p-1} on $M'$ for $t\in (0, T')$
with initial metric $\omega_0'$, where
$$T'\,=\,\sup \{t\,|\, \pi'^*_T(\pi_{T*} [\omega_T]) + t K_{M'} > 0\,\} .$$
\end{conj}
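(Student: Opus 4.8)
\medskip\noindent The plan is to run the openness--closedness scheme of the proof of Theorem~\ref{th:main-1} on the (possibly singular) variety $M'$, fed by the existence theory for complex Monge--Amp\`ere equations on varieties with mild singularities. First I would reduce \eqref{eq:p-1} on $M'$ to a scalar equation exactly as in Section~2: writing $M'_{\mathrm{reg}}$ for the regular locus, fix a representative $\psi$ of $c_1(M')=-c_1(K_{M'})$ smooth on $M'_{\mathrm{reg}}$, together with an adapted volume form $\Omega'$ with $\mathrm{Ric}(\Omega')=\psi$ (locally $\Omega'=(\sigma\wedge\bar\sigma)^{1/m}$ for local generators $\sigma$ of $mK_{M'}$, the finiteness of whose mass is precisely what ``mild singularity'' should mean). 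With $\tilde\omega'_t=\omega_0'-t\psi$, so that $[\tilde\omega'_t]={\pi'}^{*}_T(\pi_{T*}[\omega_T])+tK_{M'}$, the form $\omega=\tilde\omega'_t+t\sqrt{-1}\,\partial\bar\partial u$ solves \eqref{eq:p-1} on $M'$ if and only if
\begin{equation}\label{eq:p-1-prime}
(\tilde\omega'_t+t\sqrt{-1}\,\partial\bar\partial u)^n=e^{u}\,\Omega'\qquad\text{on }M'_{\mathrm{reg}}.
\end{equation}

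By Conjecture~\ref{conj-2} the class ${\pi'}^{*}_T(\pi_{T*}[\omega_T])+tK_{M'}$ is K\"ahler for small $t>0$ (equivalently $K_{M'}$ is $\pi'_T$-ample -- $M'$ being a flip of $M$ in the non-collapsing case); hence, by openness of the K\"ahler cone and the definition of $T'$ as a supremum, it is K\"ahler on all of $(0,T')$, and one may pick a K\"ahler representative $\eta_t$ varying smoothly in $t$ (smooth on $M'_{\mathrm{reg}}$, with bounded local potentials on $M'$, using Demailly--Paun \cite{demaillypaun} for the K\"ahler input). For fixed $t\in(0,T')$, rewriting \eqref{eq:p-1-prime} relative to $\eta_t$ turns it into $(\eta_t+\sqrt{-1}\,\partial\bar\partial\varphi)^n=e^{\varphi/t+g_t}\,\eta_t^n$ with $g_t$ smooth on $M'_{\mathrm{reg}}$. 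The key point is that the \emph{positive} exponential of the unknown makes this equation unconditionally solvable: there is no cohomological obstruction; $u$ is bounded from above by the maximum principle as in Lemma~\ref{lemm:3-1}, and is controlled from below near the degeneracy locus $Z:=\sing(M')\cup\mathrm{Exc}(\pi'_T)$ by the Ko\l{}odziej / Eyssidieux--Guedj--Zeriahi estimate for klt pairs; higher regularity on $M'_{\mathrm{reg}}$ then follows from the Schwartz-type trace estimate together with Calabi's third-derivative estimate \cite{yau} exactly as in Section~2, the weights near $Z$ playing the role of $||\sigma||^{2}$ in \eqref{eq:lap-est-2}. This produces, for each $t\in(0,T')$, a metric $\omega_t$ solving \eqref{eq:p-1} on $M'_{\mathrm{reg}}$.

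It then remains to glue these into a smooth family on $M'_{\mathrm{reg}}\times(0,T')$ and to recover the initial condition. The former is the openness argument of Lemma~\ref{lemm:2-1} in the case $t_1>0$ (implicit function theorem for the linearized operator $\Delta_1 w-t^{-1}w$), run on $M'_{\mathrm{reg}}$ with the uniform weighted estimates from the previous step. For the latter, $tu_t$ equals $\varphi_t$ minus a bounded reference potential, so $tu_t\to 0$ off $Z$ by the $C^0$-estimate, whence $\omega_t=\omega_0'-t\psi+t\sqrt{-1}\,\partial\bar\partial u_t\to\omega_0'$ as $t\to 0^{+}$, weak-$*$ as currents and smoothly off $Z$; a Monge--Amp\`ere stability estimate, or a $t$-derivative bound of the type in Lemma~\ref{lemm:2-3}, makes this precise.

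The main obstacle lies not at the fixed-$t$ level -- granting Conjecture~\ref{conj-2}, that analysis is by now fairly standard -- but at its interface with birational geometry and at the degenerate endpoint $t=0$. One needs Conjecture~\ref{conj-2} to deliver an $M'$ with $\QQ$-Gorenstein klt singularities (so that $K_{M'}$ is well-defined, $\Omega'$ has finite mass, and the singular Monge--Amp\`ere estimates apply) and with $K_{M'}$ relatively ample over $M_T$. The delicate purely analytic point is then the behaviour at $t=0$, where the reference class ${\pi'}^{*}_T(\pi_{T*}[\omega_T])$ is only nef and big: one must show the family extends continuously down to $t=0$ with limit $\omega_0'$ and -- if $T'$ is to be sharp, in the spirit of Theorem~\ref{th:main-1} -- that it cannot be continued past $T'$. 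This degenerate-endpoint analysis, together with the construction of $M'$, is where the real difficulty resides.
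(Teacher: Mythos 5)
This statement is Conjecture~\ref{conj-3}: the paper does not prove it. The authors only remark that, for the K\"ahler--Ricci flow, the work of Song--Tian \cite{songtian09} shows log-terminal singularities have the required property, and that the same should hold for the continuity method and ``may be possible to solve \dots by the current technology at hand.'' So there is no proof in the paper against which your argument can be checked; what you have written should be judged as a proposed attack on an open problem.

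As such, your outline is a reasonable roadmap, and it is essentially the route the authors hint at (reduce to a scalar Monge--Amp\`ere equation on $M'$, invoke the singular existence theory of Ko\l{}odziej/Eyssidieux--Guedj--Zeriahi and the Song--Tian framework for klt singularities, then run the openness--closedness scheme of Section~2 with weights near $\sing(M')$ and the exceptional locus). But it is not a proof, and you say so yourself: the steps you defer are exactly the content of the conjecture. Concretely: (i) you must \emph{assume} that Conjecture~\ref{conj-2} produces an $M'$ with $\QQ$-Gorenstein klt singularities and $K_{M'}$ relatively ample, which is stronger than what Conjecture~\ref{conj-2} literally asserts (``mild'' is left undefined there); (ii) the openness and closedness arguments of Lemma~\ref{lemm:2-1} and Section~2 use smoothness of the ambient manifold, Calabi's third-derivative estimate, and a genuinely K\"ahler reference form, none of which is available on $M'$ near $\sing(M')$ or at $t=0$, where $\omega_0'={\pi'}^*_T(\pi_{T*}\omega_T)$ is only a positive current in a class that need not be K\"ahler -- so even the meaning of ``solve \eqref{eq:p-1} on $M'$ with initial metric $\omega_0'$'' has to be specified before the fixed-$t$ analysis can start; and (iii) the continuity down to $t=0$ and the sharpness of $T'$ are asserted, not established. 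Your candid identification of these points as ``where the real difficulty resides'' is accurate, but it means the proposal is a program, matching the paper's own expectation, rather than a solution.
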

By the work in \cite{songtian09}, one knows that log-terminal singularities have the property in the  conjecture above, for the Ricci flow.
The same should be true for the new continuity method. In view of \cite{songtian09}, it may be possible to solve Conjecture \ref{conj-3} by the current technology at hand.

If the above conjectures can be affirmed, for some initial K\"ahler metric $\omega_0$ on $M$,
we can construct a family of  pairs $(M_t, \omega_t)$ ($0\le t < \infty$) together with
a sequence of times $T_0=0 < T_1 < T-2 < \cdots < T_k < \cdots$, satisfying:

\vskip 0.1in
\noindent
(1) For $t\in [T_{i}, T_{i+1})$ ($i\ge 0$), $M_t = M_i$ is a fixed K\"ahler variety \footnote{$M_i$ may be of smaller dimension or even an empty set.}
and $\omega_t$ is a solution
of \eqref{eq:main-1} on $M_i$ in a suitable sense. Moreover, $M_0=M$;

\vskip 0.1in
\noindent
(2) For $i \ge 1$, $M_{i+1}$ is a ``flip'' of $M_{i}$ as described in Conjecture \ref{conj-2}. If we denote by $\pi_{i}: M_{i} \mapsto M_{T_i}$ and
and $\pi_{i+1}: M_{i+1}\mapsto M_{T_i}$ the natural projections in the flip process, then
$$\lim _{t\to T_i - }\,\pi_{i*} \omega_t \,= \,\lim_{t\to T_i+}\,\pi_{i+1*} \omega_t.$$
We also have $\lim_{t\to 0}\,\omega_t = \omega_0$;

\vskip 0.1in
\noindent
(3) $\omega_t$ are smooth on the regular part of $M_t$ and continuous on the level of potentials along $t$.
\vskip 0.1in
Such a family corresponds to the solution of the K\"ahler-Ricci flow with surgery.

As in the case of K\"ahler-Ricci flow, we call $T_i$ a surgery time.
We expect that for each initial K\"ahler metric $\omega_0$, there are only finitely many surgery times, that is,

\begin{conj}
\label{conj-4}
There are only finitely many surgery times $T_0=0 < T_1 < T_2 <\cdots < T_N < \infty$ such that $M_t= M_N$ for $t > T_N$ is either empty or
a minimal model of $M$. If $M_N\not= \emptyset$, then $K_{M_N}$ is nef., consequently, \eqref{eq:main-1} admits a solution $\omega_t $ for all $t > T_N$.
If $M_N = \emptyset$, $M$ is birational to a Fano-like manifold and the converse os also true.
\end{conj}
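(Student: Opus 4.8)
The plan is to recognize the continuity method as the analytic incarnation of the \emph{Minimal Model Program with scaling} and to deduce finiteness of surgery times from algebraic termination. Since $c_1(M) = -c_1(K_M)$, the cohomological evolution reads $[\omega_t] = [\omega_0] + t\,c_1(K_M)$; setting $s = 1/t$ this is $K_M + s\,[\omega_0]$ with $s$ decreasing from $+\infty$ toward $0$, which is precisely the MMP for $K_M$ run with scaling of the K\"ahler class $[\omega_0]$. Each critical value $T$ of Theorem~\ref{th:main-1} is a nef threshold: at $t=T$ the class $E = L + T K_M$ lies on the boundary of the K\"ahler cone and is therefore nef, while by Theorem~\ref{th:main-2} it is moreover big exactly in the non-collapsing case $E^n > 0$, where Lemma~\ref{lemm:2-2} applies. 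Granting Conjectures~\ref{conj-1}--\ref{conj-3}, the surgery at $T_i$ is the analytic realization of the extremal contraction associated to the ray that becomes $E$-trivial: a divisorial contraction or flip when $E^n > 0$, and a Mori fiber contraction when $E^n = 0$, the collapsing case, in which the dimension of the new model strictly drops.

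First I would set up the analytic-to-algebraic dictionary at a single surgery time. Using Lemma~\ref{lemm:2-2} and the identity $\cS = B_+(E)$ established in the proof of Theorem~\ref{th:main-2}, I would match the contracted locus of the metric limit $\omega_T$ with the stable base locus of $E$, so that the fibration $\pi_T\colon M \to M_T$ of Conjecture~\ref{conj-1}(1) coincides with the algebraic contraction of the $E$-trivial ray. The crucial point to verify is that the flipped model $M'$ of Conjecture~\ref{conj-2} carries only \emph{log-terminal} singularities and that $K_{M'}$ remains $\RR$-Cartier, so that every step stays a klt pair; Conjecture~\ref{conj-3}, modeled on the Ricci-flow analysis of \cite{songtian09}, is exactly what guarantees this and allows the continuity method to restart on $M'$ with $\omega_0' = \pi_T'^*(\pi_{T*}\omega_T)$.

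With the dictionary in place, finiteness follows from termination. The non-collapsing steps are divisorial contractions and flips of klt pairs run with scaling of $[\omega_0]$, and termination of the MMP with scaling for such pairs is the theorem of Birkar--Cascini--Hacon--McKernan; hence between two consecutive dimension drops there are only finitely many surgery times. The collapsing steps strictly decrease $\dim_\CC M_t$, so at most $n = \dim_\CC M$ of them can occur, and together these bound the $T_i$ by a finite number $T_0 = 0 < T_1 < \cdots < T_N$. For the dichotomy at the end I would invoke the equivalence (Boucksom--Demailly--Paun--Peternell) between $K_M$ being pseudoeffective and $M$ being non-uniruled: if $K_M$ is pseudoeffective the scaled program terminates with no fiber contraction, so the final model $M_N$ has $K_{M_N}$ nef, and Corollary~\ref{coro:long-1} then furnishes a solution of \eqref{eq:main-1} for all $t > T_N$; if $K_M$ is not pseudoeffective, that is $M$ is uniruled (Fano-like), no minimal model exists, the program must end in a Mori fiber space, and the iterated fibrations drive $\dim_\CC M_t$ to zero in finite time, whence $M_N = \emptyset$. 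The same equivalence yields the converse, since a variety birational to a Fano-like one is uniruled and therefore has $K_M$ non-pseudoeffective.

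The hardest part will be the analytic side of the dictionary rather than the algebraic termination, which is available through \cite{songtian09} and the work of Birkar--Cascini--Hacon--McKernan. Concretely, one must (i) establish the diameter bound underlying Conjecture~\ref{conj-1} so that the Gromov--Hausdorff limit $(M_T, d_T)$ is a \emph{compact} K\"ahler variety and the fibration $\pi_T$ genuinely exists, and (ii) prove \emph{uniform} log-terminality of the surgered models, so that a single discrepancy bound controls all steps and the termination theorem applies across the whole process. Both are delicate: the first requires non-collapsing estimates in the spirit of Cheeger--Colding--Tian together with the partial $C^0$-estimate, and the second requires showing that the metric surgery never worsens singularities beyond klt---exactly the content of Conjectures~\ref{conj-2} and \ref{conj-3}, which are, as noted above, the most difficult ingredients of the program.
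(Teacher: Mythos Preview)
There is nothing to compare against: in the paper this statement is a \emph{conjecture}, listed in Section~4 among ``problems and speculations,'' and no proof (or even proof sketch) is offered for it. So the comparison task is vacuous.

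Your write-up is not a proof either, and you are candid about this: the argument is explicitly conditional on Conjectures~\ref{conj-1}--\ref{conj-3}, and your closing paragraph correctly isolates the diameter bound and the preservation of klt singularities under surgery as the genuine obstacles. As a \emph{strategy} your outline is sound and is precisely the heuristic the authors intend when they draw the parallel with the Ricci-flow program of \cite{songtian09}: the cohomological path $[\omega_0]+t\,c_1(K_M)$, rescaled by $1/t$, is the MMP for $K_M$ with scaling by the ample class $[\omega_0]$; granted the analytic--algebraic dictionary, termination of flips with scaling (Birkar--Cascini--Hacon--McKernan) bounds the non-collapsing surgeries, dimension drops bound the collapsing ones, and the pseudoeffectivity/uniruledness dichotomy (Boucksom--Demailly--Paun--Peternell) governs the endpoint. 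But none of this is proved in the paper, and the steps you label ``the hardest part'' are exactly the content of the open Conjectures~\ref{conj-1}--\ref{conj-3}. In short: reasonable roadmap, but there is no paper proof to match it to, and your own argument remains conditional on those unresolved conjectures.
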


Next, assuming that $M$ is a minimal model, smooth or with ``mild'' singularities, we need to analyze the asymptotic of
solutions $\omega_t$ of \eqref{eq:main-1} as $t$ tends to $\infty$.

\begin{conj}\label{conj-5}
If the Kodaira dimension $\kappa(M)=0$, then $\omega_t$ should converge to a Calabi-Yau metric $\omega_\infty$ on $M\backslash {\cal D}$, where $D$ is a subvariety, in the smooth topology. Furthermore,
$(M, \omega_t)$ should converge to a compact metric space $(M_\infty, d_\infty)$ in the Gromov-Hausdorff topology such that $M_\infty$ is the metric completion of
$(M\backslash D, \omega_\infty)$ and $d_\infty|_{M\backslash D} $ is induced by $\omega_\infty$.
\end{conj}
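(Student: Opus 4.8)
\medskip
\noindent\emph{A strategy towards Conjecture~\ref{conj-5}.}
The first step is to reduce to the situation of a fixed K\"ahler class. Since $M$ is a minimal model with $\kappa(M)=0$, the Abundance Conjecture --- known, e.g., in dimension at most three --- implies that $K_M$ is a torsion line bundle, so $c_1(M)=0$ in $H^2(M,\RR)$. One may then take $\psi=0$ and a Ricci--flat volume form $\Omega$, normalized so that $\int_M\Omega=[\omega_0]^n$; equation~\eqref{eq:p-1} becomes $(\omega_0+t\sqrt{-1}\,\partial\bar\partial u_t)^n=e^{u_t}\,\Omega$ with $\tilde\omega_t\equiv\omega_0$. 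In particular $[\omega_t]=[\omega_0]$ for all $t$ and, $K_M$ being nef, $T=\infty$ by Corollary~\ref{coro:long-1}. Set $a_t:=\sup_M u_t$ and $\phi_t:=t\,u_t-t\,a_t$, so that $\sup_M\phi_t=0$, $\omega_t=\omega_0+\sqrt{-1}\,\partial\bar\partial\phi_t$, and $(\omega_0+\sqrt{-1}\,\partial\bar\partial\phi_t)^n=e^{a_t+\phi_t/t}\,\Omega$.

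The second step is a priori estimates on $\phi_t$ that are \emph{uniform as $t\to\infty$}. The maximum principle applied to~\eqref{eq:p-1} gives $\|u_t\|_{C^0}\le\|\log(\omega_0^n/\Omega)\|_{C^0}$, hence $|a_t|\le C$ and $e^{a_t+\phi_t/t}$ is bounded above and below. Ko{\l}odziej's $L^\infty$--estimate for complex Monge--Amp\`ere equations then yields $\|\phi_t\|_{C^0}\le C$ uniformly, so $\|\phi_t/t\|_{C^0}\le C/t$; combined with $\int_M e^{u_t}\Omega=\int_M\Omega=[\omega_0]^n$ this forces $a_t\to 0$, and therefore $\|u_t\|_{C^0}=\|a_t+\phi_t/t\|_{C^0}\to 0$. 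For the Laplacian estimate one uses
$${\rm Ric}(\omega_t)\,=\,t^{-1}(\omega_0-\omega_t)\,\ge\,-\,\omega_t\qquad(t\ge 1)$$
together with the $C^0$--bound on $\phi_t$ in the Aubin--Yau argument of Section~2 (which, since $\tilde\omega_t\equiv\omega_0$, only improves as $t\to\infty$), obtaining $C^{-1}\,\omega_0\le\omega_t\le C\,\omega_0$ uniformly in $t\ge1$, as in the derivation of~\eqref{eq:lap-est}. Calabi's third--derivative estimate~\cite{yau} and elliptic bootstrapping then bound $\|\phi_t\|_{C^k}$ uniformly for every $k$.

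The third step identifies the limit. Given $t_i\to\infty$, Arzel\`a--Ascoli gives a subsequence along which $\phi_{t_i}\to\phi_\infty$ in $C^\infty$; since $u_{t_i}\to0$ and ${\rm Ric}(\omega_{t_i})\to0$, the metric $\omega_\infty:=\omega_0+\sqrt{-1}\,\partial\bar\partial\phi_\infty$ is Ricci--flat and solves $(\omega_0+\sqrt{-1}\,\partial\bar\partial\phi_\infty)^n=\Omega$. By Yau's theorem~\cite{yau}, $\omega_\infty$ is the unique Calabi--Yau metric in the class $[\omega_0]$; in particular $\phi_\infty$ does not depend on the subsequence, so $\omega_t\to\omega_\infty$ in $C^\infty$ as $t\to\infty$. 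On the compact manifold $M$ this in particular yields the Gromov--Hausdorff convergence of $(M,\omega_t)$ to $(M,\omega_\infty)$, with $D=\emptyset$ and $M_\infty=M$. Thus the conjecture holds when $M$ is smooth, granting abundance.

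The fourth step, the singular case, is where the genuine difficulty lies. If $M$ is only a minimal model with mild (canonical, or log--terminal) singularities, then $K_M$ is still torsion, but $\omega_\infty$ becomes a \emph{singular} Calabi--Yau metric, whose existence and regularity rest on results of Eyssidieux--Guedj--Zeriahi type rather than on Yau's theorem, and $D$ is the singular locus of $M$ (together with any locus where the potential estimates degenerate). The estimates of the second step must then be localized on $M\setminus D$, with weights of the form $\|\sigma\|^{2(a+1)}$ exactly as in~\eqref{eq:lap-est-2}, so that one only gets $C^\infty_{\rm loc}$ convergence on $M\setminus D$. The main obstacle is then the global Gromov--Hausdorff statement: one must first prove a uniform diameter bound for $(M,\omega_t)$ --- precisely the difficulty already highlighted for Conjecture~\ref{conj-1} --- after which the Cheeger--Colding--Tian compactness theory, applicable here because ${\rm Ric}(\omega_t)\ge-\omega_t$ is bounded below along the whole family, should identify the limit $(M_\infty,d_\infty)$ with the metric completion of $(M\setminus D,\omega_\infty)$ and show that $d_\infty$ restricts on $M\setminus D$ to the distance of $\omega_\infty$. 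Ruling out a collapse of $\omega_t$ near $D$ in this last step is expected to be the crux.
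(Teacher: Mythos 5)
The statement you are addressing is Conjecture~\ref{conj-5}; the paper does not prove it. It is left open there, with only the remark that the key should be a diameter estimate on $(M,\omega_t)$ (pointing to Song's work \cite{song14}), so there is no proof in the paper to compare against. Your Steps 1--3 are essentially correct as far as they go: once $K_M$ is torsion one may take $\psi=0$ and a fixed Ricci-flat $\Omega$, the class $[\omega_t]=[\omega_0]$ is then constant, the maximum principle gives a $t$-independent bound on $u_t$, the normalization $\int_M e^{u_t}\Omega=[\omega_0]^n$ forces $\sup_M u_t\in[0,C/t]$, Ko{\l}odziej's (or Yau's) $C^0$ estimate controls the normalized potential $\phi_t$, the Schwarz-lemma/Aubin--Yau argument of Section 2 applies uniformly because $\tilde\omega_t\equiv\omega_0$ and $1/t\le 1$, and Yau's existence/uniqueness theorem identifies the full limit as the Calabi--Yau metric in $[\omega_0]$, with $\mathcal{D}=\emptyset$ and Gromov--Hausdorff convergence trivially following from smooth convergence. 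This is consistent with the techniques of Sections 2--3 and is a correct treatment of that sub-case.

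It does not, however, prove the statement. First, the reduction to $c_1(M)=0$ rests on the Abundance Conjecture ($K_M$ nef and $\kappa(M)=0$ imply $K_M$ torsion), open beyond low dimensions, so even your smooth case is conditional. Second, and more importantly, the situation the conjecture is really about --- minimal models that are singular or where the estimates degenerate along a nonempty subvariety $\mathcal{D}$ --- is only described, not argued, in your Step 4: the uniform diameter bound for $(M,\omega_t)$, non-collapsing near $\mathcal{D}$, and the identification of the Gromov--Hausdorff limit with the metric completion of $(M\setminus \mathcal{D},\omega_\infty)$ are precisely the points the paper singles out as the missing key, and your proposal supplies no new mechanism for them beyond invoking Cheeger--Colding--Tian once a diameter bound is assumed. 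So what you have is a proof of the easy smooth sub-case (modulo abundance) together with a program for the remainder, not a proof of Conjecture~\ref{conj-5}; you state this honestly, but the gap is the entire analytic content of the conjecture.
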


Similarly, we expect
\begin{conj}\label{conj-6}
If $\kappa(M)=\dim M = n$, then $t^{-1} \omega_t$ should converge to
a K\"ahler-Einstein metric $\omega_\infty$ with scalar curvature $-n$ on $M\backslash {\cal D}$, where $D$ is a subvariety, in the smooth topology. Furthermore,
$(M, t^{-1} \omega_t)$ should converge to a compact metric space $(M_\infty, d_\infty)$ in the Gromov-Hausdorff topology such that $M_\infty$ is the metric completion of $(M\backslash D, \omega_\infty)$ and $d_\infty|_{M\backslash D} $ is induced by $\omega_\infty$.
\end{conj}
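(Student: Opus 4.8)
\medskip
\noindent\textbf{Proof strategy.} The plan is to run the argument of Theorem \ref{th:main-2} in the rescaled, large-time regime, with the nef and big class $[\omega_0]-Tc_1(M)$ of that theorem replaced by $c_1(K_M)$ and the limit $t\to T$ replaced by $1/t\to 0$. Set $\hat\omega_t:=t^{-1}\omega_t$; since the Ricci form is scale invariant, \eqref{eq:main-1} gives
\[
{\rm Ric}(\hat\omega_t)\,=\,-\hat\omega_t\,+\,t^{-1}\omega_0,\qquad [\hat\omega_t]\,=\,t^{-1}[\omega_0]\,+\,c_1(K_M),
\]
so as $t\to\infty$ the class $[\hat\omega_t]$ converges to $c_1(K_M)$, which is nef and big because $M$ is a minimal model with $\kappa(M)=n$. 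By the base-point-free theorem $K_M$ is semiample, yielding the canonical model $M_{\rm can}$, a birational morphism $\Phi\colon M\to M_{\rm can}$ and a K\"ahler form $\omega_{\rm can}$ on $M_{\rm can}$; put $\chi:=\Phi^*\omega_{\rm can}\ge 0$, a smooth closed $(1,1)$-form in $c_1(K_M)$ which is strictly positive exactly on $M\setminus B_+(K_M)$, where $B_+(K_M)={\rm Exc}(\Phi)$. Choosing $\Omega'$ a smooth positive volume form with ${\rm Ric}(\Omega')=-\chi$ and normalising the potential by $-n\log t$, the scalar equation \eqref{eq:p-1} becomes, after rescaling, the Aubin-Yau type equation
\[
(\chi+t^{-1}\omega_0+\sqrt{-1}\,\partial\bar\partial\,\psi_t)^n\,=\,e^{\psi_t}\,\Omega',
\]
in which $\chi+t^{-1}\omega_0$ is a smooth K\"ahler metric on all of $M$ for every $t>0$ and the right-hand side is $t$-independent in form. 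The expected limit $\omega_\infty$ is then the canonical singular K\"ahler-Einstein metric on $M_{\rm can}$ -- the one of Eyssidieux-Guedj-Zeriahi and of Tsuji -- which satisfies ${\rm Ric}(\omega_\infty)=-\omega_\infty$, hence has scalar curvature $-n$, and is smooth on $M\setminus B_+(K_M)$; the subvariety $D$ in the statement can be taken to be $B_+(K_M)$.

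\medskip
\noindent\textbf{Convergence on $M\setminus B_+(K_M)$.} First I would establish a \emph{uniform} bound $\|\psi_t\|_{C^0(M)}\le C$ as $t\to\infty$. Here the maximum-principle argument of Section 3 is lossy, since $\chi+t^{-1}\omega_0$ degenerates along $B_+(K_M)$; instead one combines the integral identity $\int_M e^{\psi_t}\Omega'=[\hat\omega_t]^n\to c_1(K_M)^n>0$ with Kolodziej's $L^\infty$ estimate for degenerate complex Monge-Ampere equations (in the form of Eyssidieux-Guedj-Zeriahi), which applies because $\chi$ is semiample and $\Omega'$ has bounded density. Given this $C^0$ bound, the rest is exactly as in Section 3 and is local on $M\setminus B_+(K_M)$: since ${\rm Ric}(\hat\omega_t)\ge-\hat\omega_t$, the Schwartz-type (Chern-Lu) computation leading to \eqref{eq:lap-est-2} gives $C_K^{-1}\omega_0\le\hat\omega_t\le C_K\omega_0$ on each compact $K\subset M\setminus B_+(K_M)$ (where $\chi>0$), and Calabi's third-derivative estimate together with Schauder bootstrapping upgrade this to uniform $C^\infty(K)$ bounds; hence $\hat\omega_t$ subconverges smoothly on $M\setminus B_+(K_M)$ to a K\"ahler-Einstein metric solving the limit equation. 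To promote subconvergence to genuine convergence, differentiate the equation in $s=1/t$ exactly as in Lemma \ref{lemm:2-3}, obtain $\int_M|\partial_s\psi|^2\,\hat\omega_t^n\le C_T$, and conclude as at the end of Section 3 that $\hat\omega_t$ converges, smoothly on compacts of $M\setminus B_+(K_M)$, to a limit $\omega_\infty$ independent of the chosen sequence $t_i\to\infty$; since $\psi_\infty$ is pulled back from $M_{\rm can}$, $\omega_\infty$ coincides there with the unique canonical K\"ahler-Einstein metric.

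\medskip
\noindent\textbf{Gromov-Hausdorff convergence, and the main obstacle.} From ${\rm Ric}(\hat\omega_t)\ge-\hat\omega_t$ the metrics $\hat\omega_t$ have Ricci curvature uniformly bounded below, and ${\rm Vol}(M,\hat\omega_t)=[\hat\omega_t]^n\to c_1(K_M)^n>0$. \emph{Provided one can bound the diameters of $(M,\hat\omega_t)$ uniformly}, Gromov's precompactness gives Gromov-Hausdorff subconvergence to a compact length space $(M_\infty,d_\infty)$, and the smooth convergence on compacts of $M\setminus B_+(K_M)$ identifies a dense open subset of $(M_\infty,d_\infty)$ isometrically with $(M\setminus B_+(K_M),\omega_\infty)$. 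It then remains to show that $M_\infty$ is \emph{exactly} the metric completion of $(M\setminus B_+(K_M),\omega_\infty)$ -- equivalently $M_{\rm can}$ with its canonical metric -- i.e.\ that no mass or distance escapes near $B_+(K_M)$; in the non-collapsed case this should follow from the Cheeger-Colding-Tian structure theory combined with the volume convergence above and a capacity (or measure) estimate for $\hat\omega_t^n$ near $B_+(K_M)$. The diameter bound itself, and ruling out collapse along the fibres of the contraction $M\to M_{\rm can}$ over lower-dimensional strata, are where the genuine difficulty lies; these are precisely the issues already flagged for Conjecture \ref{conj-1}, and they constitute the main obstacle.
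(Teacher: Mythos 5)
You should be aware that the statement you were asked to prove is Conjecture \ref{conj-6}: the paper offers no proof of it, and explicitly identifies the missing ingredient (``The key should be a diameter estimate on $(M,\omega_t)$'', in view of \cite{song14}). Measured against that, your proposal is a reasonable strategy outline but not a proof, and the part you leave open is precisely the part that is genuinely open. Your first half --- rescale to $\hat\omega_t=t^{-1}\omega_t$, reduce to a Monge--Amp\`ere equation with reference form $\chi+t^{-1}\omega_0$ degenerating to a semipositive representative of $c_1(K_M)$, and obtain uniform local estimates away from $B_+(K_M)$ --- is essentially the extension of the proof of Theorem \ref{th:main-2} that the authors themselves indicate in the introduction for the case $(-c_1(M))^n>0$. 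The only substantive deviation is your use of the Kolodziej/Eyssidieux--Guedj--Zeriahi $L^\infty$ estimate (and semiampleness of $K_M$ via base-point-freeness) for the $C^0$ bound, where the paper's Section 3 instead picks an effective divisor $D$ with $K_M-[D]$ ample (Lemma \ref{lemm:2-2}), subtracts $\log\|\sigma\|^2$ from the potential and applies the maximum principle; either device works, and yours gives a global rather than merely local bound, at the cost of importing heavier machinery. The Chern--Lu estimate, Calabi third-derivative estimate, and the Lemma \ref{lemm:2-3}-type $L^2$ bound on the time derivative (in the variable $s=1/t$, where uniformity of the constant as $s\to0$ still needs to be checked, or replaced by uniqueness of the singular K\"ahler--Einstein metric) are all consistent with the paper's scheme.

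The genuine gap is the second half of the conjecture. A uniform lower Ricci bound ${\rm Ric}(\hat\omega_t)\ge-\hat\omega_t$ plus non-collapsing of total volume does not by itself give Gromov--Hausdorff precompactness: one needs the uniform diameter bound, and nothing in your argument (or in the paper's estimates \eqref{eq:lap-est-2}, which degenerate like negative powers of $\|\sigma\|$ near $D$) controls distances across $B_+(K_M)$; the fibres contracted by $M\to M_{\rm can}$ could a priori have unbounded diameter or the metric completion of $(M\setminus B_+(K_M),\omega_\infty)$ could fail to be compact or fail to coincide with the Gromov--Hausdorff limit. Your appeal to Cheeger--Colding--Tian theory to identify $M_\infty$ with the metric completion presupposes both the diameter bound and a no-mass-loss/capacity estimate near $B_+(K_M)$ that you do not supply. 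You flag this honestly as ``the main obstacle,'' and it coincides with what the paper singles out as the key difficulty for Conjectures \ref{conj-1}, \ref{conj-5} and \ref{conj-6}; but as a result the proposal establishes at most the first sentence of the conjecture (smooth convergence away from a subvariety, conditional on the $C^0$ input), not the Gromov--Hausdorff statement.
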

In view of recent works of Jian Song \cite{song14}, we believe that the above two conjectures are solvable. The key should be a
diameter estimate on $(M, \omega_t)$.

It remains to consider the cases: $1\le \kappa(M) \le n-1$. In these cases, we can not expect the existence of any K\"ahler-Einstein metrics (even with possibly singular along a subvariety) on $M$ since $K_M^n=0$. However, we expect
\begin{conj}\label{conj-7}
If $1\le \kappa(M)\le n$, then $(M,t^{-1} \omega_t)$ should converge to a compact metric space $(M_\infty, d_\infty)$ in the Gromov-Hausdorff topology such that $M_\infty$ is a K\"ahler variety of complex dimension $\kappa(M)$ and $d_\infty$ is induced by a generalized K\"ahler-Einstein metric $\omega_\infty$
(cf. \cite{songtian09}) on the regular part of
$M_\infty$. Moreover, $(M, t^{-1} \omega_t)$ should converge to $\omega_\infty$ in a stronger topology, such as in $C^{1,1}$-topology on K\"ahler potentials,
on the regular part of $M_\infty$.
\end{conj}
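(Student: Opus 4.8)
\textbf{Towards a proof of Conjecture \ref{conj-7}.} The plan is to transplant the Song--Tian analysis of the normalized K\"ahler--Ricci flow on minimal models (cf. \cite{songtian07}, \cite{songtian09}) to the continuity method \eqref{eq:main-1}, along which one has $\mathrm{Ric}(t^{-1}\omega_t)\ge -(t^{-1}\omega_t)$ by the computation recalled after Theorem \ref{th:main-2} --- a two-sided Ricci control in the limit that the flow does not have. Write $\hat\omega_t = t^{-1}\omega_t$; scale invariance of the Ricci form turns \eqref{eq:main-1} into $\hat\omega_t = t^{-1}\omega_0 - \mathrm{Ric}(\hat\omega_t)$, so $[\hat\omega_t] = t^{-1}[\omega_0] + 2\pi c_1(K_M)\to 2\pi c_1(K_M)$ as $t\to\infty$. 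Since $M$ is a minimal model of Kodaira dimension $1\le\kappa\le n$, assuming $K_M$ semiample (abundance, or the range in which it is presently known) the pluricanonical maps give the Iitaka fibration $f\colon M\to M_{\mathrm{can}}$ onto the canonical model, with $\dim_\CC M_{\mathrm{can}} = \kappa$ and general fibre a Calabi--Yau manifold of dimension $n-\kappa$; fix $\chi = \frac1m f^*\omega_{FS}$, a semipositive form in $2\pi c_1(K_M)$, positive in the horizontal directions over $M_{\mathrm{can}}\setminus\cS_T$ for a proper subvariety $\cS_T$.

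Writing $\hat\omega_t = \chi + \sqrt{-1}\,\partial\bar\partial\psi_t$ and tracing through \eqref{eq:p-1}, $\psi_t$ solves a complex Monge--Amp\`ere equation whose formal limit, after pushing forward to $M_{\mathrm{can}}$, is the Song--Tian equation
$$(\chi + \sqrt{-1}\,\partial\bar\partial\psi_\infty)^\kappa = e^{\psi_\infty}\,\Omega_{\mathrm{can}},$$
where $\Omega_{\mathrm{can}}$ is the canonical measure assembled from a pluricanonical volume form on $M$ and the fibrewise Calabi--Yau volumes; its unique solution $\omega_\infty = \chi + \sqrt{-1}\,\partial\bar\partial\psi_\infty$ is the generalized K\"ahler--Einstein metric of \cite{songtian09}, satisfying $\mathrm{Ric}(\omega_\infty) = -\omega_\infty + \omega_{WP}$ on $M_{\mathrm{can}}\setminus\cS_T$ with $\omega_{WP}$ the Weil--Petersson form of the fibration.

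For the a priori estimates I would: (i) obtain a uniform bound $\|\psi_t\|_{C^0}\le C$ by Ko\l odziej-type pluripotential estimates, legitimate because the reference class is big on $M_{\mathrm{can}}$, with Lemma \ref{lemm:3-1} as the starting point; (ii) run the generalized Schwarz-lemma computation of Section 3, now organized along the fibration: $\mathrm{Ric}(\hat\omega_t)\ge-\hat\omega_t$ bounds $\mathrm{tr}_{\hat\omega_t}(f^*\omega_{FS})$ from above, so $f^*\omega_{FS}\le C\hat\omega_t$ --- the horizontal directions do not collapse while the vertical volume collapses at rate $t^{-(n-\kappa)}$; (iii) get local Calabi / $C^{2,\alpha}$ estimates on compact subsets of $M\setminus f^{-1}(\cS_T)$, hence smooth subsequential convergence $\hat\omega_t\to f^*\omega_\infty$ there. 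Combined with the uniform diameter bound (next paragraph) and the Ricci lower bound, the Cheeger--Colding--Tian compactness theory for K\"ahler manifolds gives subsequential Gromov--Hausdorff convergence $(M,\hat\omega_t)\to(M_\infty,d_\infty)$ to a compact normal K\"ahler variety whose regular part has complex dimension $\kappa$ (the partial $C^0$-estimate producing the embedding); pluripotential uniqueness of $\psi_\infty$ then promotes this to convergence of the whole family and identifies $d_\infty$ on the regular part with the distance of $\omega_\infty$, while the local regularity theory for degenerate Monge--Amp\`ere equations together with the two-sided Ricci bound yields the $C^{1,1}$ bound on potentials. Finally $(M_\infty,d_\infty)$ is recognized as the metric completion of $(M_{\mathrm{can}}\setminus\cS_T,\omega_\infty)$.

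The main obstacle, as the authors stress, is the uniform \emph{diameter} estimate $\mathrm{diam}(M,\hat\omega_t)\le C$ --- equivalently, that the geometry does not explode near the discriminant $\cS_T$ nor in the collapsing fibre directions. I would follow J.~Song \cite{song14}: combine the $C^0$ estimate with an $L^1$-bound on $\mathrm{Ric}(\hat\omega_t)$, which holds because $\mathrm{Ric}(\hat\omega_t)$ is bounded below and $\int_M\mathrm{Ric}(\hat\omega_t)\wedge\hat\omega_t^{n-1}$ is cohomological (hence bounded), together with a segment inequality over the base. The genuinely delicate points are controlling distances across $f^{-1}(\cS_T)$, where $\omega_{WP}$ degenerates and the fibres may become singular, and handling the mild singularities of $M_{\mathrm{can}}$ itself; and, absent abundance, one must restrict to the situations in which semiampleness of $K_M$ is available --- or follows from running the program of Section 4.
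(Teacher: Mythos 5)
You have not closed a gap here, because there is no gap-free argument to compare against: the statement you are addressing is Conjecture \ref{conj-7}, which the paper does not prove. The authors only remark that the existence of the generalized K\"ahler--Einstein metric on the base is known from \cite{songtian09}, and that the key missing ingredient should be a diameter estimate for $(M,t^{-1}\omega_t)$, with \cite{song14} cited as grounds for optimism. Your proposal is essentially the same program the authors have in mind (Iitaka fibration, Song--Tian canonical measure and generalized K\"ahler--Einstein equation on $M_{\mathrm{can}}$, Schwarz-lemma control of the horizontal directions, Gromov--Hausdorff compactness via the Ricci lower bound), so as a roadmap it is faithful; but it is a roadmap, not a proof, and you concede as much when you defer the diameter bound.

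Beyond the diameter estimate, two steps as written would not go through. First, for $1\le \kappa(M)\le n-1$ the metrics $t^{-1}\omega_t$ collapse: the total volume is of order $t^{-(n-\kappa)}$, so the Cheeger--Colding--Tian structure theory and the partial $C^0$-estimate you invoke to produce the limit variety and its embedding are exactly the tools that are \emph{not} available --- they are established only in the non-collapsed setting, and extending them to collapsed limits (and identifying $(M_\infty,d_\infty)$ with the metric completion of $(M_{\mathrm{can}}\setminus \cS_T,\omega_\infty)$, with regular part of dimension $\kappa$) is precisely the open content of the conjecture, not a citation one can make. Second, the passage from \eqref{eq:p-1} to the ``formal limit'' Monge--Amp\`ere equation on the base, the uniqueness/full-sequence convergence, and the asserted $C^{1,1}$ bound on potentials all require uniform estimates (fiberwise volume asymptotics, uniform $C^0$ bounds across the discriminant, a two-sided control of $\mathrm{Ric}(t^{-1}\omega_t)$ relative to the collapsing metric, which your upper bound $t^{-1}\omega_0$ does not give in the fiber directions) that are stated but not derived; the semiampleness of $K_M$ is likewise an assumption unless $\kappa(M)=n$ or abundance is granted. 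So the proposal should be read as a plausible strategy consistent with the paper's Section 4, with the genuinely hard analytic points (collapsing compactness theory, diameter, identification of the limit) still open.
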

The existence of generalized K\"ahler-Einstein metrics was established in \cite{songtian09}.

\end{document}